\DeclareMathOperator{\lcm}{lcm}
\theoremstyle{plain}
\newtheorem{theorem}{Theorem}
\newtheorem{cor}[theorem]{Corollary}
\newtheorem{lemma}[theorem]{Lemma}
\newtheorem{conj}[theorem]{Conjecture}
\theoremstyle{definition}
\newtheorem{definition}[theorem]{Definition}
\newtheorem{remark}[theorem]{Remark}
\numberwithin{theorem}{section}
\providecommand{\floor}[1]{\left\lfloor#1\right\rfloor}
\providecommand{\Z}{\mathbb{Z}} \providecommand{\R}{\mathbb{R}}
\providecommand{\Q}{{\mathbb{Q}}}
\newcommand{\setBuilder}[2]{\left\{\vphantom{x^2}{#1}\,\middle|\,{#2}\right\}}
\newcommand{\ideal}[1]{\left\langle{#1}\right\rangle}
\newcommand{\proofPart}[1]{{\bf $\boldsymbol{#1}$:}}
\newcommand{\proofCase}[1]{\proofPart{\text{The case }#1}}
\DeclareMathOperator{\eqp}{EQP}
\newcommand{\defeq}{=}
\title{The Parametric Frobenius Problem\thanks{Published in the Electronic Journal of Combinatorics 22 (2015), \#P2.36.}}
\author{Bjarke Hammersholt Roune\thanks{Currently at Google. Research supported by Algorithmische und Experimentelle Methoden in Algebra, Geometrie und Zahlentheorie (SPP 1489)}\\
\small Department of Mathematics\\[-0.8ex]
\small University of Kaiserslautern\\[-0.8ex] 
\small Kaiserslautern, Germany\\
\small\tt bjarke.roune@gmail.com\\
\and
Kevin Woods\\
\small Department of Mathematics\\[-0.8ex]
\small Oberlin College\\[-0.8ex]
\small Oberlin, Ohio, USA\\
\small\tt Kevin.Woods@oberlin.edu
}
\date{\small Mathematics Subject Classifications: 11D07, 52C07, 11H06}
\begin{document}

\maketitle 

\begin{abstract}
Given relatively prime positive integers $a_1,\ldots,a_n$, the Frobenius number is the largest integer that cannot be written as a nonnegative integer combination of the $a_i$. We examine the parametric version of this problem: given $a_i=a_i(t)$ as functions of $t$, compute the Frobenius number as a function of $t$. A function $f:\Z_+\rightarrow\Z$ is a quasi-polynomial if there exists a period $m$ and polynomials $f_0,\ldots,f_{m-1}$ such that $f(t)=f_{t\bmod m}(t)$ for all $t$. We conjecture that, if the $a_i(t)$ are polynomials (or quasi-polynomials) in $t$, then the Frobenius number agrees with a quasi-polynomial, for sufficiently large $t$. We prove this in the case where the $a_i(t)$ are linear functions, and also prove it in the case where $n$ (the number of generators) is at most 3.
\end{abstract}

\section{Introduction}
Given positive integers $a_i$, $1\le i\le n$, let
\[\ideal{a_1,\ldots, a_n}\defeq\setBuilder{\sum_{i=1}^n p_ia_i}{p_i\in\Z_{\ge 0}}\]
be the semigroup generated by the
$a_i$. If the $a_i$ are relatively prime, define the \emph{Frobenius
number} $F(a_1,\ldots,a_n)$ to be the largest integer not in
$\ideal{a_1,\ldots, a_n}$. The \emph{Frobenius problem} of determining $F(a_1,\ldots,a_n)$ has a long history --- Sylvester proved \cite{Sylvester84} in 1884 that $F(a,b)=ab-a-b$, and see Ram{\'{\i}}rez Alfons{\'{\i}}n's book \cite{RA05} for many subsequent results.

It will be convenient to also define $F(a_1,\ldots,a_n)$ in the case where the $a_i$ are not relatively prime, so that there is no largest integer not in the semigroup. A reasonable definition seems to be the largest integer in the group $\Z a_1+\cdots+\Z a_n$ that is not in the semigroup $\ideal{a_1,\ldots, a_n}$. That is, if $d$ is the greatest common divisor of $a_1,\ldots,a_n$, then $F(a_1,\ldots,a_n)=dF(\frac{a_1}{d},\ldots,\frac{a_n}{d})$. Sylvester's identity then becomes $F(a,b)=\lcm(a,b)-a-b$.

The \emph{parametric Frobenius problem} is, given functions $a_i:\Z_+\rightarrow\Z_+$ to determine $F\big(a_1(t),\ldots,a_n(t)\big)$ as a function of $t$. For example, using Sylvester's identity,
\[F(t,t+2)=\begin{cases}t(t+2)-t-(t+2) & \text{if $t$ is odd,}\\\frac{t(t+2)}{2} - t -(t+2) & \text{if $t$ is even.}\end{cases}\]

We see that $F(t,t+2)$ is a \emph{quasi-polynomial}; a function $f:\Z_+\rightarrow\Z$ is a quasi-polynomial if
there exist an $m\in\Z_+$ and polynomials $f_0,\ldots,f_{m-1}\in\Q[t]$
such that $f(t)=f_{t\bmod m}(t)$ for all $t\in\Z_+$. Here $m$ is a
\emph{period} of $f$ and the $f_i$ are \emph{components} of $f$. We will assume that all of our functions are integer-valued, but note that the $F(t,t+2)$ example shows that we may need polynomials with rational coefficients.

In general, our functions may misbehave for small $t$. We say that a property is \emph{eventually} true if it is true for all sufficiently large $t$. We say that  $f(t)\in\eqp$ (short for \emph{eventual quasi-polynomial}) if $f(t)$ eventually agrees with a quasi-polynomial. We say that the \emph{degree} of $f\in\eqp$ is the maximum degree of its components.
 
 We conjecture that, if $a_i(t)\in\eqp$, then $F\big(a_1(t),\ldots,a_n(t)\big)\in\eqp$. We prove this for the special case where the $a_i(t)$ are linear functions and for the special case where $n\le 3$.

\begin{conj}
Suppose that $a_i(t)\in\eqp$, $1\le i\le n$, are eventually positive. Then $F\big(a_1(t),\ldots,a_n(t)\big)\in\eqp$.
\end{conj}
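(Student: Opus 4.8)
The plan has three parts: reduce to a clean normal form, reformulate the Frobenius number as a parametric lattice-point problem, and analyze that problem.

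\textbf{Reductions.} First I would reduce to the case where the $a_i(t)$ are honest polynomials with positive leading coefficients. If $m$ is a common period of the eventual quasi-polynomials $a_i(t)$, it suffices to prove the conclusion along each arithmetic progression $t = ms+r$, $0 \le r < m$; on such a progression each $a_i(ms+r)$ is a polynomial in $s$, and eventual positivity lets us discard the finitely many progressions on which some $a_i$ vanishes identically. Second, I would reduce to the eventually-coprime case: writing $d(t) = \gcd\big(a_1(t),\ldots,a_n(t)\big)$, the convention for $F$ in the non-coprime case gives $F\big(a_1(t),\ldots,a_n(t)\big) = d(t)\,F\big(a_1(t)/d(t),\ldots,a_n(t)/d(t)\big)$, so it is enough to know $d(t) \in \eqp$ and $a_i(t)/d(t) \in \eqp$. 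Both follow from a lemma --- proved by running the Euclidean algorithm in $\Q[t]$ and bounding the extra common factor created when clearing denominators --- that the gcd of finitely many integer polynomials, evaluated at $t \in \Z_+$, is an eventual quasi-polynomial.

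\textbf{Reformulation.} An integer $N$ lies in $\langle a_1(t),\ldots,a_n(t)\rangle$ exactly when the polytope
\[ P_{t,N} = \setBuilder{x \in \R^n_{\ge 0}}{\textstyle\sum_{i=1}^n a_i(t)\,x_i = N} \]
contains a lattice point, and $F\big(a_1(t),\ldots,a_n(t)\big)$ is the largest $N$ for which $P_{t,N}\cap\Z^n=\emptyset$. So the conjecture asserts that this largest ``bad'' $N$ is an eventual quasi-polynomial in $t$. The tool one would like is a meta-theorem of the form ``parametric integer-programming feasibility, depending tamely on $t$, has eventual-quasi-polynomial answers.'' For the two cases settled here it can be pushed through by hand. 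When the $a_i(t)$ are linear, $P_{t,N}$ varies with $t$ by rational functions of bounded degree; one can then follow Kannan's reformulation of the Frobenius number via the covering radius of the simplex $\sum_{i\ge 2} a_i(t)\,x_i \le 1$ with respect to the lattice $\{x \in \Z^{n-1} : \sum_{i\ge 2} a_i(t)\,x_i \equiv 0 \pmod{a_1(t)}\}$, track how this finite, $n$-dependent data moves with $t$, and check that the resulting scaled covering radius is in $\eqp$. When $n \le 3$ one can instead compute the Apéry set of $\langle a_1(t),a_2(t),a_3(t)\rangle$ relative to $a_1(t)$ explicitly, using the staircase/determinantal structure of $3$-generated numerical semigroups, and read off $F = \max_r w_r(t) - a_1(t)$, verifying that the pieces are eventual quasi-polynomials.

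\textbf{The main obstacle.} What blocks the general case --- and is exactly why this remains a conjecture --- is that once some $\deg a_i \ge 2$, the feasibility condition $\exists\, p \in \Z^n_{\ge 0}:\ \sum_i a_i(t)\,p_i = N$ contains genuine products of the parameter with the variables (monomials like $t^2 p_i$), so it is not a Presburger formula in $(t, p, N)$ and the known quasi-polynomiality meta-theorems do not apply. Nor is there an evident geometric shortcut for general $n$: the face structure of $P_{t,N}$ becomes arbitrarily complicated and the covering-radius bookkeeping that works for linear $a_i$ has no uniform analogue. The most promising path to the full conjecture is therefore to develop a ``parametric Presburger arithmetic'' that permits the parameter to scale the other variables and to prove an eventual-quasi-polynomiality theorem for it; the Frobenius conjecture would follow as a special case. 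Lacking that, one is confined to the case-by-case arguments above --- which is why only linear $a_i(t)$ and $n \le 3$ are within reach in this paper.
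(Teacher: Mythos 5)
The statement you were asked to prove is a conjecture; the paper does not prove it, and the most important thing you could say is what you did say: that the general case remains open. Your diagnosis of the obstruction is accurate and aligned with the paper's own framing --- once some $\deg a_i \ge 2$, the feasibility formula $\exists\, p\in\Z^n_{\ge 0}\colon \sum_i a_i(t)p_i = N$ involves monomials such as $t^2 p_i$ that mix the parameter with the existential variables, so it leaves the Presburger-with-parameter setting where quasi-polynomiality meta-theorems are known (the paper's ``reasonable'' vs.~``unreasonable'' distinction, citing \cite{Woods15}). Your preliminary reductions (restrict to arithmetic progressions in $t$ so each $a_i$ becomes an honest polynomial; divide out the gcd) are also correct and match the paper's Remark \ref{remark:wlog} and the identity $F(a_1,\ldots,a_n)=dF(a_1/d,\ldots,a_n/d)$.

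Where you diverge from the paper is in the machinery proposed for the two special cases that \emph{are} proved. For linear $a_i$ you invoke Kannan's covering-radius reformulation of the Frobenius number; the paper instead decomposes $S(t)=\ideal{a_1(t),\ldots,a_n(t)}$ as a finite union of translates $h(t)+\ideal{a_1(t),a_n(t)}$ of a two-generator semigroup (Lemma~\ref{lemma:decomp}), passes to canonical coordinates $u=pa_1+qa_n$ with $0\le p<a_n$, and reads the Frobenius number off the corners of a staircase-shaped set of non-representable integers (Lemmas~\ref{lemma:oneh} and~\ref{lemma:finish}). For $n\le 3$ you suggest computing the Ap\'ery set; the paper instead runs R\o dseth's algorithm step by step and verifies that each division, remainder, and comparison preserves membership in $\eqp$. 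Your two alternatives are plausible in principle, but each would still require an independent argument that the relevant invariant (the scaled covering radius, or the Ap\'ery-set elements $w_r(t)$) is itself an eventual quasi-polynomial of $t$ --- which is exactly the kind of bookkeeping the paper's lemmas carry out for its chosen route. Since neither you nor the paper has a route past degree $\ge 2$ or $n\ge 4$, the conjecture stands.
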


\begin{theorem}
\label{thm:main}
Suppose that $a_i(t)\in\eqp$, $1\le i\le n$, are eventually positive and have degree at most 1. Then $F\big(a_1(t),\ldots,a_n(t)\big)\in\eqp$, with degree at most 2.
\end{theorem}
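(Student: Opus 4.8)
The plan is to reduce to the case of honest linear polynomials and then exploit a two‑dimensional reformulation of representability. First I would pass to a fixed residue class modulo a common period of the $a_i$, so that $a_i(t)=\alpha_it+\beta_i$ is a genuine linear polynomial with $\alpha_i\ge 0$ (and $\alpha_i>0$ whenever $\beta_i\le 0$); it suffices to prove the statement on each class. If all $\alpha_i=0$ the $a_i$ are eventually constant and $F$ is eventually constant, so assume some $\alpha_i>0$. Writing $d(t)=\gcd_i a_i(t)$, a short argument (Gauss's lemma if the $a_i$ share a linear factor, a Bézout/resultant bound otherwise) shows $d(t)\in\eqp$ of degree $\le 1$, and in the ``common linear factor'' case the $a_i(t)/d(t)$ are eventually constant, forcing $F(t)=d(t)\cdot(\text{const})$ of degree $\le 1$. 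So we may assume $d(t)=1$ for all large $t$; then $S(t):=\langle a_1(t),\ldots,a_n(t)\rangle$ is cofinite in $\Z_{\ge 0}$ and $F(t)$ is its largest gap. (Constant generators $\alpha_i=0$ can be retained throughout; they only make the columns below half‑infinite, which is easier, so I will assume all $\alpha_i>0$ in the description.)

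The key is the slope decomposition $\sum_i a_i(t)x_i=\big(\sum_i\alpha_ix_i\big)t+\sum_i\beta_ix_i$. Let $T:=\big\langle(\alpha_1,\beta_1),\ldots,(\alpha_n,\beta_n)\big\rangle\subseteq\Z_{\ge0}\times\Z$ be the fixed ($t$‑independent) affine semigroup generated by the coefficient vectors, and for $u\ge0$ put $V_u=\{v:(u,v)\in T\}$. Then
\[
N\in S(t)\iff \exists\,u\ge0:\ N-ut\in V_u .
\]
Two facts make this usable. (i) $T$ is definable in Presburger arithmetic (it is a linear image of the lattice points of the positive orthant), so by quantifier elimination it has an explicit description as a Boolean combination of linear inequalities and congruences; concretely, the structure theory of two‑dimensional affine semigroups writes $T$ as its saturation $\overline T=\mathrm{cone}(T)\cap(\text{the lattice }\textstyle\sum_i\Z(\alpha_i,\beta_i))$ with finitely many translated rays along the edges of $\mathrm{cone}(T)$ (and finitely many points) removed. (ii) Since every $a_i(t)$ is $\Theta(t)$, one has an a priori bound $F(t)\le B(t)$ for an explicit quadratic $B$; and if $N\le B(t)$ then any witness $u$ above satisfies $(u,N-ut)\in\mathrm{cone}(T)=\{(u,v):u\ge0,\ \sigma_-u\le v\le\sigma_+u\}$, i.e. $u\in[\,N/(t+\sigma_+),\,N/(t+\sigma_-)\,]$, where $\sigma_\pm$ are the extreme slopes $\beta_i/\alpha_i$. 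That interval has length $O(B(t)/t^2)=O(1)$, so for any single $N=O(t^2)$ only boundedly many values of $u$ (a bound independent of $t$) ever need to be examined.

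With these in hand, the heart of the proof is to show that $F(t)$ — the largest $N\le B(t)$ not of the form $ut+v$ with $(u,v)\in T$ — is an eventual quasi‑polynomial. Substituting $v=N-ut$ into the inequalities/congruences describing $T$ replaces constant coefficients by affine functions of $t$ (e.g. $\alpha u+\beta v\ge\gamma$ becomes $(\alpha-\beta t)u+\beta N\ge\gamma$), and eliminating $u$ yields conditions on $(t,N)$ that involve divisibilities of variable modulus — such as $(\alpha-\beta t)\mid(\text{an affine function of }N\text{ and }t)$ — together with floors of ratios; so the graph of $F$ is \emph{not} Presburger‑definable. One still argues it is an $\eqp$ by locating the last gap: consecutive columns $u,u+1$ have images $[u(t+\sigma_-),u(t+\sigma_+)]$ and $[(u+1)(t+\sigma_-),(u+1)(t+\sigma_+)]$ whose ranges overlap precisely when $u\ge u^\ast(t):=(t+\sigma_-)/(\sigma_+-\sigma_-)$, so the last gap lies in, or just below, a column within $O(1)$ of $u^\ast(t)=\Theta(t)$. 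For each edge of $\mathrm{cone}(T)$, each of the finitely many ``hole families'' of $T$, and each residue of the relevant linear forms in $u^\ast(t)$ modulo the finitely many moduli that occur, one gets — on an eventually periodic set of $t$ — an explicit formula for the last gap that is quadratic in $t$. Assembling these finitely many cases gives $F(t)\in\eqp$, and the bound $F(t)=O(t^2)$ forces every component to have degree $\le 2$.

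The main obstacle is exactly this last step: controlling, uniformly in $t$, how the $O(1)$ overlapping columns near $u^\ast(t)$ interlock and which column carries the last gap. A priori the identity of that column and the alignment of the hole families could depend on $t$ in a way that is not eventually periodic; making the case analysis genuinely finite — and each case genuinely quasi‑polynomial — is where the real work lies. A cleaner packaging, which I would aim for, is to prove once and for all the general lemma: for any fixed Presburger set $T\subseteq\Z^2$ contained in a pointed rational cone with all first coordinates nonnegative, the largest integer not representable as $ut+v$ with $(u,v)\in T$ is, as a function of $t$, in $\eqp$ of degree at most $2$; Theorem~\ref{thm:main} then follows from the reductions above.
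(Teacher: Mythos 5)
Your reformulation---fix the $t$-independent affine semigroup $T\subseteq\Z^2$ generated by the coefficient vectors $(\alpha_i,\beta_i)$, so that $N\in S(t)$ iff $(u,N-ut)\in T$ for some $u\ge 0$---is a genuinely different route from the paper's. The paper instead covers $S(t)$ by finitely many $t$-dependent translates of the two-generator numerical semigroup $\ideal{a_1(t),a_n(t)}$ (Lemma~\ref{lemma:decomp}), puts each translate into canonical form modulo $a_n(t)$ (Lemma~\ref{lemma:oneh}), and reads off the largest gap from the resulting staircase as a max of finitely many quadratics (Lemma~\ref{lemma:finish}). Your picture pushes all the $t$-dependence into the single shear $(u,v)\mapsto(u,v+ut)$ applied to a fixed Presburger set, which is conceptually clean and would, if completed, give a more general-looking lemma; the paper keeps the $t$-dependence in the coefficients but reduces to a one-dimensional staircase where the eventual quasi-polynomial structure is transparent and can be certified using only the division-algorithm tools of Lemma~\ref{lemma:tools}.

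That said, there is a genuine gap, which you flag yourself: you never actually prove that the largest $N$ with $(u,N-ut)\notin T$ for all $u$ is in $\eqp$. The sentence ``for each edge of $\mathrm{cone}(T)$, each of the finitely many hole families of $T$, and each residue of the relevant linear forms \dots one gets an explicit formula for the last gap that is quadratic in $t$'' asserts the theorem rather than arguing for it. The substantive difficulty is exactly the one you name: as $t$ grows, the admissible window $u\in[N/(t+\sigma_+),\,N/(t+\sigma_-)]$ has endpoints that are floors of ratios of linear forms in $t$ and $N$, and it is not clear a priori that the identity of the column carrying the last gap, or the phase alignment between the hole families of $T$ and that window, is eventually periodic in $t$. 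The ``cleaner packaging'' lemma you close with is a restatement of the missing step, not a proof of it; supplying that proof would require controlling quotients and remainders of eventual quasi-polynomials (this is precisely what Lemma~\ref{lemma:tools}(2)--(3) and the canonical-form computation inside Lemma~\ref{lemma:oneh} do), and you would most likely end up reconstructing something close to the paper's staircase argument in shear coordinates. One smaller point worth noting: the a priori quadratic bound $F(t)\le B(t)$ does hold, but it needs a brief argument (e.g.\ an Erd\H{o}s--Graham or Schur-type bound) since $a_1(t)$ and $a_n(t)$ need not be coprime after the reduction you describe.
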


\begin{theorem}
\label{thm:3gens}
Suppose that $a_i(t)\in\eqp$, $1\le i\le n$, are eventually positive, with $n\le 3$. Then $F\big(a_1(t),\ldots,a_n(t)\big)\in\eqp$.

\end{theorem}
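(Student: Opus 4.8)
The plan is to reduce to honest polynomials on arithmetic progressions and then run a parametric version of a Euclidean-style algorithm for the three-generator Frobenius number. Since membership in $\eqp$ can be decided one residue class at a time, and since a function that agrees with a polynomial on each of finitely many progressions covering $\Z_+$ lies in $\eqp$, it suffices to fix a modulus $M$ and a residue $\rho$ and show that $t\mapsto F(a_1(t),a_2(t),a_3(t))$ agrees with a polynomial for all large $t\equiv\rho\pmod M$. So from now on each $a_i$ is a genuine polynomial (with possibly rational coefficients) evaluated along a progression, eventually positive, and we are free to pass to a sub-progression finitely often; in particular any $\eqp$ quantity we produce becomes, after such refinement, a genuine polynomial, and since a nonzero polynomial has eventually constant sign, any case distinction made by comparing two such quantities is eventually resolved consistently along the progression.

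I would then carry out three reductions. First, divide out the content: the pointwise gcd $d(t)=\gcd(a_1(t),a_2(t),a_3(t))$ is, along the progression, eventually a polynomial --- the basic ``parametric gcd'' fact, provable via resultants --- so using $F(a_1,a_2,a_3)=d\cdot F(a_1/d,a_2/d,a_3/d)$, together with the fact that an exact integer quotient of polynomials taken along a progression is again a polynomial there, we may assume $\gcd(a_1(t),a_2(t),a_3(t))=1$. Second, apply Johnson's reduction $F(a_1,a_2,a_3)=d\,F(a_1/d,a_2/d,a_3)+(d-1)a_3$ with $d=\gcd(a_1(t),a_2(t))$ (valid since now $\gcd(d,a_3)=1$), so that we may assume $\gcd(a_1(t),a_2(t))=1$. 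Third, decide whether some $a_i(t)$ lies in the semigroup generated by the other two: the condition $c(t)\in\langle a(t),b(t)\rangle$ with $\gcd(a,b)=1$ is equivalent to $x_0(t)\,a(t)\le c(t)$, where $x_0(t)=c(t)\,a(t)^{-1}\bmod b(t)$ is the least nonnegative solution of a linear congruence --- an $\eqp$ quantity by the key lemma below --- so this is eventually resolved consistently by the sign remark. If it holds, then $\langle a_1(t),a_2(t),a_3(t)\rangle$ is really two-generated and Sylvester's identity $F(a,b)=\lcm(a,b)-a-b$, with parametric gcd and $\lcm=ab/\gcd$, finishes the proof. Otherwise $\langle a_1(t),a_2(t),a_3(t)\rangle$ is minimally three-generated with $\gcd(a_1,a_2)=1$, which (after sorting the $a_i$, whose order is eventually fixed) is the standard setting for Rødseth's algorithm \cite{RA05}.

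The core is then to run Rødseth's algorithm parametrically. Its ingredients are: the modular quantity $s(t)=a_3(t)\,a_2(t)^{-1}\bmod a_1(t)$; the ``subtractive'' continued-fraction expansion of $a_1(t)/s(t)$, producing decreasing nonnegative integers $s_0(t)>s_1(t)>\cdots$ and auxiliary integers $p_{-1}(t),p_0(t),\ldots$; the selection, by an inequality comparing consecutive convergents to $a_3(t)/a_1(t)$, of a distinguished index; and a closed formula expressing $F$ as a maximum of two explicit nonnegative integer combinations of $a_1(t),a_2(t),a_3(t)$ read off near that index. (Alternatively one could use Herzog's structure theorem: a minimally three-generated semigroup with trivial gcd is either a complete intersection, where $F$ is a gluing formula involving gcd's and a two-generator Frobenius number, or has a ``determinantal'' presentation whose relation coefficients are the least positive multiples of $a_i$ lying in $\langle a_j,a_k\rangle$ and their representations, with $F$ an explicit maximum of two combinations of the $a_i$ --- data of the same flavor.) Either way $F$ is computed by a recipe of \emph{bounded length} built from $+$, $-$, $\times$, exact division, $\gcd$, $\lcm$, $\floor{\cdot}$, modular inverse, and ``least nonnegative solution of a two-variable linear Diophantine constraint,'' followed by a pointwise maximum of boundedly many $\eqp$ functions; and a pointwise maximum of boundedly many polynomials is, after refining the progression so that one of them dominates, again a polynomial.

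The crux --- and the step I expect to be the main obstacle --- is the parametric Euclidean-algorithm lemma underlying all of this: each of the listed operations sends $\eqp$ functions to $\eqp$ functions. Closure under $+$, $-$, $\times$ is immediate; closure under exact division and under $\floor{\cdot}$ is routine, since on a sufficiently fine progression a rational-coefficient polynomial has periodic fractional part; and $\lcm$ reduces to $\gcd$. The hard content is (a) that $\gcd(a(t),b(t))\in\eqp$ and, much more delicately, (b) that the modular inverse $b(t)^{-1}\bmod a(t)$ and the least-nonnegative-solution operator lie in $\eqp$, and indeed (c) that the entire Rødseth recursion has $\eqp$ output. The difficulty common to all three is that the underlying Euclidean process on $a(t),b(t)$ runs for on the order of $\log t$ steps --- \emph{unbounded} in $t$ --- so one cannot merely unroll it into finitely many $\eqp$ operations. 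What must be proved is that the process \emph{stabilizes}: after a number of steps bounded in terms of $\deg a$ and $\deg b$ alone, the successive partial quotients become eventually quasi-polynomial of a fixed controlled shape, so that the relevant output admits a closed $\eqp$ description despite the unbounded recursion depth. This stabilization --- essentially that the continued-fraction expansion of a ratio of polynomials is eventually periodic in the appropriate sense --- is where I expect the real work to lie; granted it, the remainder is bookkeeping.
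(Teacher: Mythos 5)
Your overall strategy---reduce via content and pairwise gcd to the relatively prime case, then run R\o dseth's three-generator algorithm parametrically---matches the paper's proof. But the proposal has a genuine gap: you correctly identify the crux as showing that R\o dseth's subtractive recursion $s_{i-1}(t)=q_{i+1}(t)s_i(t)-s_{i+1}(t)$, $0\le s_{i+1}(t)<s_i(t)$, terminates after a number of steps that is \emph{bounded} as $t\to\infty$ (so that the recursion can be unrolled into finitely many $\eqp$ operations), but you explicitly decline to prove it (``granted it, the remainder is bookkeeping''). That is precisely where the paper does the work: in Step~(6) it argues that although $\deg s_{i+1}$ may equal $\deg s_i$, one gets $\deg s_{i+2}<\deg s_i$ whenever $\deg s_i>0$, by tracking the sign of the polynomial-division remainder from Lemma~\ref{lemma:tools}(2); so the degree reaches zero after boundedly many steps, after which the constant values strictly decrease and the process halts. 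That degree-drop argument is the content of the proof, not bookkeeping, and your proposal does not supply it or anything in its place.

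A smaller miscalibration: you suggest the same ``on the order of $\log t$ steps'' unboundedness threatens the parametric gcd, modular inverse, and extended Euclidean computations. It does not. The division of Lemma~\ref{lemma:tools}(2) \emph{strictly} reduces degree, so the polynomial Euclidean algorithm used to prove Lemma~\ref{lemma:tools}(4) terminates after at most $\deg f+\deg g$ steps, independent of $t$; once a remainder has degree zero it is an eventually periodic function of $t$, and the remaining work is elementary. The only recursion whose depth is genuinely at issue is the negative-remainder (by-excess) Euclidean algorithm in R\o dseth's Steps~5--6, precisely because the constraint $0\le s_{i+1}(t)<s_i(t)$ does not force a one-step degree drop. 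So the difficulty is narrower than you describe, but in the one place it does arise, you have left it open.
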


These results are examples of so-called ``unreasonable'' appearances of qua\-si-po\-ly\-no\-mi\-als, as discussed by Woods \cite{Woods14}. ``Reasonable'' appearances trace back to Ehrhart's classical result \cite{Ehrhart62} that, if $P\subseteq\R^n$ is a polytope with rational vertices, then $f(t)=\#(tP\cap\Z^n)$ is a quasi-polynomial. Note that if $P$ is defined with linear inequalities $\mathbf b_i\cdot \mathbf x\le c_i$, then $tP$ is defined with linear inequalities $\mathbf b_i\cdot \mathbf x\le c_it$. As $t$ changes, these linear inequalities move, but their normal vectors ($\mathbf b_i$) remain the same. Indeed, Woods proved \cite{Woods15} that any example over the integers defined with linear inequalities, boolean operations (and, or, not), and quantifiers ($\forall$, $\exists$) has this quasi-polynomial behavior. This is true even if there is more than one parameter; for example,
\[\#\setBuilder{(x,y)\in\Z^2_{\ge 0}}{2x\le t,\, 3y\le s} = \bigg(\floor{\frac{t}{2}+1}\bigg)\bigg(\floor{\frac{s}{3}+1}\bigg)\]
is a quasi-polynomial of period 2 in $t$ and period 3 in $s$.

If we look at the parametric Frobenius problem, however, we see that it does not fit this pattern. For example, if we want to ask whether $u\in\ideal{t,t+1,t+2}$, we are asking whether the polytope
\[\setBuilder{(x,y,z)\in\R^3_{\ge 0}}{u=tx + (t+1)y+(t+2)z}\]
contains any integer points. For a fixed $t$, this is a 2-dimensional triangle in $\R^3$; as $t$ changes, this triangle ``twists'' (the normal vector changes). Examples such as this were categorized in \cite{Woods14} as ``unreasonable'', though they are conjectured to still lead to eventual quasi-polynomial behavior.

This paper adds a third example to the list of recent  results demonstrating this phenomenon; previously Chen, Li, and Sam \cite{CLS12} showed that the number of integer points in a polytope whose vertices are rational functions of $t$ is in $\eqp$,  and Calegari and Walker \cite{CW11} showed that the vertices of the integer hull of such a polytope have coordinates in $\eqp$. A critical tool used in all of these results is that the division algorithm and the gcd of polynomials has quasi-polynomial behavior (cf. Lemma \ref{lemma:tools}); for example, the Euclidean algorithm yields
\[\gcd(2t+1, 5t+6)=\gcd(t+4, 2t+1)=\gcd(7,t+4)=\begin{cases}7& \text{if $t\equiv 3\bmod 7$,}\\1&\text{otherwise.}\end{cases}\]

Note that unlike in the ``reasonable'' case, these results only hold with one parameter variable. For example $F(s,t)=\lcm(s,t)-s-t$ is not a quasi-polynomial in $s$ and $t$.

The original inspiration for this paper comes from a conjecture of Wagon \cite{W15} (see \cite[Section 17]{ELSW07}) that, for any fixed $M$ and residue class $j$ of $t\bmod M^2$, there exist (usually positive) integers $c_{M,j}$ and $d_{M,j}$ such that eventually
\[F(t,t+1^2,t+2^2,\ldots,t+M^2)=\frac{1}{M^2}\left(t^2+c_{M,j}t\right)-d_{M,j}.\]
Using the proof of Theorem \ref{thm:main}, we can prove that there is indeed quasi-polynomial behavior with period $M^2$. In fact, such a result holds in greater generality:

\begin{cor}
\label{cor:wagon}
Suppose $b_1<b_2<b_3<\cdots<b_n$ are integers. Then $F(t+b_1, t+b_2,\ldots,t+b_n)\in\eqp$ with period $b_n-b_1$.
\end{cor}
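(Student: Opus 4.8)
The plan is to run the argument of Theorem~\ref{thm:main} while tracking precisely which periods it produces. First I would normalize: write $c_i:=b_i-b_1$, so that $0=c_1<c_2<\cdots<c_n=:N$ with $N=b_n-b_1$, and set $G(s):=F(s,s+c_2,\ldots,s+c_n)$, so that the quantity in question is $G(t+b_1)$. Composing an element of $\eqp$ with the shift $s\mapsto s+b_1$ again gives an element of $\eqp$ of the same period, so it is enough to show $G\in\eqp$ with period $N$. Theorem~\ref{thm:main} already gives $G\in\eqp$, so only the period needs work.

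To control the period I would use a ``slab'' description of the semigroup. Let $T:=\ideal{c_2,\ldots,c_n}$, and for $w\in\Z$ let $\mu(w)$ be the least number of summands in a representation of $w$ over $\{c_2,\ldots,c_n\}$, with $\mu(0)=0$ and $\mu(w)=\infty$ unless $w\in T$. Separating out the generator $s$, a short computation gives the membership criterion
\[
u\in\ideal{s,s+c_2,\ldots,s+c_n}\iff \exists\,k\ge 0:\ \mu(u-ks)\le k,
\]
so with $\text{Slab}_k := ks + \set{w:\mu(w)\le k}$ the semigroup is $\bigcup_{k\ge 0}\text{Slab}_k$, and $G(s)$ is the largest element of $d(s)\Z$ lying outside it, where $d(s)=\gcd(s,c_2,\ldots,c_n)$.

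Now $N$ enters in two ways. First, $d(s)=\gcd(s,\gcd(c_2,\ldots,c_n))$ is periodic of period $\gcd(c_2,\ldots,c_n)$, a divisor of $N$. Second, and this is the crux, $\mu$ is eventually quasi-linear of period $N$: in fact $\mu(w+N)=\mu(w)+1$ for all large $w\in T$. Here $\le$ is clear; for $\ge$, a representation of $w+N$ with at most $\mu(w)$ summands either contains a summand equal to $c_n=N$ (delete it to represent $w$ with fewer than $\mu(w)$ summands, a contradiction) or uses only summands $\le c_{n-1}$, forcing $\mu(w)\ge(w+N)/c_{n-1}$; since $c_{n-1}<N$, for $w$ large this contradicts the crude estimate $\mu(w)\le\lceil w/N\rceil + O(1)$, which follows by greedily subtracting copies of $N$ and spending only $O(1)$ extra summands to steer the remainder past the finitely many gaps of $T$. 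Consequently, for $k$ large, $\set{w:\mu(w)\le k}$ is $[0,kN]$ with the fixed finite gap-set $\Gamma$ of $T$ removed and a set $kN-V$ removed, where $V\subseteq[0,O(N))$ is finite and independent of $k$; that is, $\text{Slab}_k=[ks,k(s+N)]\setminus\bigl((ks+\Gamma)\cup(k(s+N)-V)\bigr)$.

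Finally I would read off $G$. Consecutive slabs overlap once $k\ge\lceil s/N\rceil$, so the complement of the semigroup breaks into finitely many families: the between-slab intervals $\bigl(k(s+N),(k+1)s\bigr)$; the points $ks+\gamma$ ($\gamma\in\Gamma$) not covered by a lower slab; and the points $k(s+N)-v$ ($v\in V$) not covered by a higher slab. In each family the omitted values increase with $k$, the admissible range of $k$ has the form $0\le k\le\ell(s)/N + O(1)$ for a linear form $\ell$, and its largest integer solution is a period-$N$ quasi-polynomial of degree $1$ in $s$; substituting it back --- after intersecting with $d(s)\Z$, which only perturbs matters by the period-$N$ quantity $d(s)$ --- gives a period-$N$ quasi-polynomial of degree $2$ for the largest omitted value in that family. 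Then $G(s)$ is the maximum of these, and a maximum of finitely many elements of $\eqp$, each of period $N$, is again in $\eqp$ with period $N$, since along each residue class modulo $N$ one of them eventually dominates; this proves the corollary. I expect the main obstacle to be the third step: establishing $\mu(w+N)=\mu(w)+1$ and with it the description of $\text{Slab}_k$ that is uniform in $k$ modulo $N$, together with the bookkeeping that each family of omitted values is cut out by a period-$N$ threshold on $k$ --- this is exactly where the sharp value $b_n-b_1$ appears, rather than the uncontrolled multiple that Theorem~\ref{thm:main} by itself would yield. The non-primitive case $d(s)>1$, where one also invokes $F=d(s)\,F(\,\cdot\,/d(s))$ on the relevant sub-progressions, is routine and is already handled in the proof of Theorem~\ref{thm:main}.
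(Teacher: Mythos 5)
Your proposal takes a genuinely different route from the paper's. The paper proves this corollary by re-running the proof of Theorem~\ref{thm:main} on the specific generators $a_i(s)=b_ns+j+b_i$ (after fixing a residue class $t\equiv j\bmod b_n$ and writing $t=b_ns+j$), checking step by step that Lemma~\ref{lemma:decomp}, the computation $\gcd(a_1,a_n)=\gcd(j,b_n)$, and the polynomial long divisions inside the proof of Lemma~\ref{lemma:oneh} never introduce new periodicity: the crucial observation is that the leading coefficient $b_n$ of $a_n(s)$ always divides the leading coefficient of the dividend, so the long division proceeds over~$\Z$. Your proposal instead builds the semigroup directly from a ``slab'' decomposition $\bigcup_{k\ge 0}\big(ks+\{w:\mu(w)\le k\}\big)$, where $\mu$ counts summands over $\{c_2,\ldots,c_n\}$, and locates the period $N=b_n-b_1$ in the quasi-linearity $\mu(w+N)=\mu(w)+1$ for large $w$. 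Your membership criterion and that key lemma are both correct, and they give a cleaner structural explanation of where the period comes from than the paper's step-by-step bookkeeping does.

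However, the final ``read off $G$'' step is more delicate than your sketch suggests when $g:=\gcd(c_2,\ldots,c_n)>1$, and this is not fully covered by deferring to the $d(s)>1$ reduction. Even after dividing out $d(s)=\gcd(s,g)$, one may still have $g':=g/d(s)>1$ with $\gcd(s',g')=1$; then $T$ and its finite gap set $\Gamma$ live inside $g'\Z$, and a point $ks'+\gamma$ can only be covered by a lower slab $k'$ satisfying $g'\mid(k-k')$, because $(k-k')s'+\gamma\in g'\Z$ and $\gcd(s',g')=1$ force $g'\mid(k-k')$. So the slabs stratify by $k\bmod g'$, ``not covered by a lower slab'' must quantify over $k'=k-g',k-2g',\ldots$, and the relevant threshold involves $\mu(g's'+\gamma)$ rather than $\mu(s'+\gamma)$. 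The period still comes out to $N$ because $g'\mid N$ and $\mu$ has quasi-period $N$, but this interaction between the two reductions is precisely what the paper sidesteps by working in Lemma~\ref{lemma:oneh}'s two-generator canonical-form picture, and it is more than ``routine'': it is the piece that would need to be written out carefully for your argument to be complete.
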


When the $b_i$ form an arithmetic sequence, a precise quasi-polynomial formula was previously given by Roberts \cite{Roberts56}:
\[F(t,t+d,\ldots,t+sd)=\left(\floor{\frac{t-2}{s}}+1\right)t+(d-1)(t-1)-1.\]

In Section 2, we work through the example
\[F(t,t+1,t+2)=\left(\floor{\frac{t-2}{2}}+1\right)t-1=\begin{cases} \frac{t^2}{2} & \text{if $t$ even,}\\ \frac{t^2}{2}-\frac{t}{2}-1 & \text{if $t$ odd,}\end{cases}\]
which will give a flavor of our general proof. In Section 3, we prove Theorem \ref{thm:main}. In Section 4, we prove the various lemmas needed. In Section 5, we prove Corollary \ref{cor:wagon}. In Section 6, we prove Theorem \ref{thm:3gens}, using R\o dseth's algorithm \cite{Rodseth78} for the 3 generator Frobenius problem.

The proof of Theorem \ref{thm:main} relies on the fact that semigroups with only two generators are much easier to deal with. Indeed, we have the following definition and lemma:

\begin{definition}
\label{def:canon}
Let $a,b\in\Z_+$ be relatively prime, and let $c\in\Z$. The \emph{canonical form} for $c$ is given by $c=pa+qb$ with $p,q\in \Z$ and $0\le p<b$.
\end{definition}

\begin{lemma}
\label{lemma:canon}
Let $a,b\in\Z_+$ be relatively prime, and let $c\in\Z$.

\begin{enumerate}
\item The canonical form for $c$ exists and is unique. In particular, if $c=p'a+q'b$ is any form with $p',q'\in \Z$,  and if $k$ and $r$ are the quotient and remainder when $p'$ is divided by $b$, then the canonical form for $c$ is $ra+(q'+ka)b$.
\item If $c=pa+qb$  is in canonical form, $c\in\ideal{a,b}$ if and only if $q\ge 0$.
\end{enumerate}
\end{lemma}

\section{An Example}
In this example, we compute $F(t,t+1,t+2)$. Let $a=t$, $b=t+1$, and $c=t+2$ be our three generators, let $S\defeq\ideal{a,b,c}$, and let $T\defeq\ideal{a,c}$. Notice that $2b=a+c$. This implies that $S=T\cup (b+T)$, as follows: if $pa+qb+rc$ is a representation of an element of $S$, with $p,q,r\in\Z_{\ge 0}$ and $q\ge 2$, then $(p+1)a+(q-2)b+(r+1)c$ is also a representation, so we may assume without loss of generality that $q$ is $0$ or $1$ (cf. Lemma \ref{lemma:decomp}). Next, we run the extended Euclidean algorithm on the integers $a$ and $c$, and get
\[\gcd(a,c)=\gcd(t,t+2)=\gcd(2,t)\]
(cf. Lemma \ref{lemma:tools}). The next division step in the algorithm depends on the parity of $t$, and rather than end up with the messy $\floor{t/2}$, we divide into two cases.

\bigskip

\proofCase{t \text{ is odd}} Let $t=2s+1$, so that $a=2s+1$, $b=2s+2$, and $c=2s+3$. Now
\[\gcd(a,c)=\gcd(2,2s+1)=\gcd(1,2)=1,\]
and the extended Euclidean algorithm yields $1=(s+1)a-sc$. Let $u\in \Z$, and we are wondering whether $u\notin S$, that is, $u\notin T$ and $u\notin b+T$. Suppose that
\[u=pa+qc\]
 is the canonical form for $u$ (see Definition \ref{def:canon}). Lemma \ref{lemma:canon}(2) tells us that $u \in T$ if and only if $q\ge 0$. Since we are looking for $u\notin T$, we may assume from here on out that $q<0$. To characterize when $u\in b+T$ (that is, $u-b\in T$) we must find the canonical form for $u-b$.

First we compute the canonical form for $b$. Multiplying the equation $1=(s+1)a-sc$ by $b=2s+2$ yields \emph{some} form for $b$:
\[b=(2s+2)(s+1)a-(2s+2)sc.\]
By Lemma \ref{lemma:canon}(1), we may find the canonical form by dividing $(2s+2)(s+1)$ by $c=2s+3$: the quotient is $s$ with remainder $s+2$, giving the canonical form for $b$ as
\[b=(s+2)a-sc.\] Note that we got a little lucky here: our remainder of $s+2$ is clearly less than $c=2s+3$; if our remainder had instead been $s+7$, say, then we would only have the canonical form for sufficiently large $s$.

Now we have that \emph{some} form for $u-b$ is
\[u-b=(p-s-2)a+(q+s)c.\]
Is this the canonical form? There are two cases:

If $p\ge s+2$, then $0\le p-s-2\le p<c$, and this is in canonical form. Therefore, to have $u-b\in T$, we must have $q+s\ge 0$ (again using Lemma \ref{lemma:canon}(2)), that is, $q\ge -s$.

 If $p<s+2$, then the canonical form is
\[u-b=(p-s-2+c)a+(q+s-a)c=(p+s+1)a+(q-s-1)c\]
(to check that this is canonical, note that $p\ge 0$ and $s+2<c$ imply $p-s-2+c\ge 0$, and $p<s+2$ implies $p-s-2+c<c$). In this case, to have $u-b\in T$, we must have $q\ge s+1$. This implies that $q\ge 0$, but we have assumed $q< 0$ (so that $u\notin T$). Therefore this case never has $u-b\in T$.

\begin{figure}
\begin{center}
\includegraphics[width=2in]{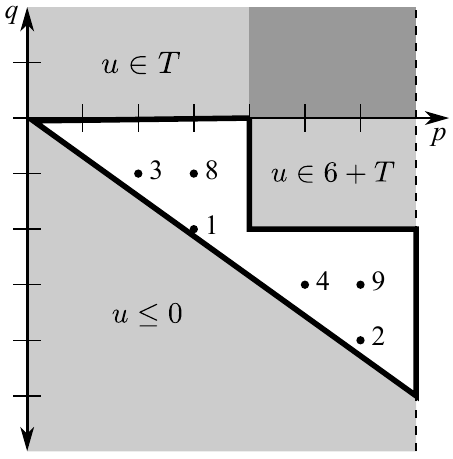}
\caption{The $t=5$ case of the example. We have $T=\ideal{5,7}$ and $S=\ideal{5,6,7}=T\cup(6+T)$. Points $(p,q)\in\Z^2$ with $0\le p<7$ are the canonical forms for $u=5p+7q\in \Z$. Then $q\ge 0$ corresponds to $u\in T$, $(p,q)\ge (4,-2)$ corresponds to $u\in 6+T$, and positive $u$ such that $u\notin S$ are labelled beside their corresponding $(p,q)$. The ``corners'' $u=8$ and $u=9$ are candidates for the Frobenius number, and so $F(5,6,7)=\max\{8,9\}=9$.}
\label{fig:ex}
\end{center}
\end{figure}

To summarize across both cases, if $u=pa+qc$, with $0\le p < c=2s+3$ in the canonical form, then $u\notin S$ if and only if
\[ \text{not}(q\ge 0)\quad\text{and}\quad\text{not}\big((p,q)\ge(s+2,-s)\big)\]
(cf. Lemma \ref{lemma:oneh}).
The set of such $(p,q)$ has a ``stairstep'' shape, as can be seen in Figure \ref{fig:ex} for $t=5$. In particular, the set of such $(p,q)$ can be rewritten as $p\ge 0$ and 
\[\big(p\le s+1\quad\text{and}\quad q\le -1\big)\quad\text{or}\quad\big(p\le 2s+2\quad\text{and}\quad q\le-s-1\big)\]
(cf. proof of Lemma \ref{lemma:finish}, $d=1$ case). The two ``corners'' in this picture, where both inequalities of one of these two  conjunctions are tight, give our candidates for the largest $u\notin S$: it must be either
\[(s+1)a-1c=2s^2+s-2 \quad\text{or}\quad (2s+2)a+(-s-1)c=2s^2+s-1.\]
The latter is always larger (in general, one might only be eventually larger than the other), and so we have
\[F(t,t+1,t+2)=2s^2+s-1=\frac{t^2}{2}-\frac{t}{2}-1,\]
in this case where $t=2s+1$ is odd.

\bigskip

\proofCase{t\text{ is even}} Let $t=2s$, so that $a=2s$, $b=2s+1$, and $c=2s+2$.
To have $u\notin S$ we must have both $u\notin T=\ideal{a,c}$ and $u\notin b+T$. Note that every element in $T$ is even and every element of $b+T$ is odd. Therefore the largest even integer not in S is the largest even integer not in $T$, which since $T$ has only 2 generators is simply $\lcm(a,c)-a-c=2s^2-2s-2$ (we're getting lucky here that we didn't have to do an analysis similar to the $t$ is odd case). Similarly the largest odd integer not in $S$ is the largest odd integer not in $b+T$, which is $b+(2s^2-2s-2)=2s^2-1$. The largest integer not in $S$ is then the maximum of these two candidates, so $F(t,t+1,t+2)=2s^2-1=\frac{t^2}{2}-1$ (cf. proof of Lemma \ref{lemma:finish}, $d>1$ case), in this case where $t$ is even.

\bigskip

Combining the even and odd case, we have proved that
\[F(t,t+1,t+2)=\begin{cases} \frac{t^2}{2} & \text{if $t$ even,}\\ \frac{t^2}{2}-\frac{t}{2}-1 & \text{if $t$ odd.}\end{cases}\]

\section{Proof of Theorem \ref{thm:main}}
This first lemma gives us the basic tools we need: 
\begin{lemma}
\label{lemma:tools} Given $f,g\in \eqp$, 
\begin{enumerate}
\item There exists an $m\in\Z_+$ such that, for each $0\le j< m$, $f(ms+j)$ eventually agrees with a polynomial in $\Z[s]$.
\item If $\deg(g)>0$, there exists $q,r\in\eqp$ such that $f(t)=q(t)g(t)+r(t)$ and $\deg(r)<\deg(g)$. Note that this is the analogue of the traditional division algorithm over $\Q[t]$.
\item If $g(t)$ is eventually positive, there exists $q,r\in\eqp$ such that $f(t)=q(t)g(t)+r(t)$ and eventually $0\le r(t)<g(t)$. Furthermore, $\deg(r)\le \deg(g)$. Note that this is the analogue of the traditional division algorithm over $\Z$.
\item There exists $p,q,d\in\eqp$ such that $\gcd\big(f(t),g(t)\big)=d(t)$ and $d(t)=p(t)f(t)+q(t)g(t)$.
\item We have $\max\big(f(t),g(t)\big)\in\eqp$.
\end{enumerate}
\end{lemma}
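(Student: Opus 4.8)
The plan is to prove the five parts in order, with part~(1) serving as the tool that reduces everything to honest polynomials on arithmetic progressions. For part~(1): since $f\in\eqp$, it eventually agrees with a quasi-polynomial of some period $m_0$ whose components lie in $\Q[t]$, so along the progression $t=m_0 s+j$ we already have $f(m_0 s+j)\in\Q[s]$ for large $s$; the only issue is integrality of the coefficients. Here I would invoke the structure of integer-valued polynomials: a polynomial of degree $d$ that is integral at all large integers is a $\Z$-linear combination of the binomials $\binom{t}{0},\dots,\binom{t}{d}$, and if $m$ is a multiple of $d!\,m_0$ then substituting $t=ms+j$ sends each $\binom{ms+j}{k}$ into $\Z[s]$ (for $\ell\ge 1$ the coefficient of $s^\ell$ acquires a factor $m^\ell$, hence $k!$, while the constant term is a product of $k$ consecutive integers, also divisible by $k!$). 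So with this $m$ each $f(ms+j)$ agrees eventually with a polynomial in $\Z[s]$. Fixing one such $m$ for all EQPs under discussion, we may henceforth identify an EQP with an $m$-tuple of polynomials in $\Z[s]$, and its degree with the maximum of their $s$-degrees.

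For parts~(2) and~(3): on each residue class $j$, divide $F_j$ by $G_j$ in $\Q[s]$ to get $F_j=Q_jG_j+R_j$ with $\deg R_j<\deg G_j$; reassembling the $Q_j$ and $R_j$ and undoing the substitution $s=(t-j)/m$ produces $q,r\in\eqp$ with $f=qg+r$ and $\deg r<\deg g$, which is part~(2) (we may assume $g$ is eventually nonzero, which holds in all our applications, e.g. $g$ eventually positive). For part~(3) with $\deg g>0$, apply part~(2) and then re-apply part~(1) to refine $m$ so that the quotient is given by polynomials in $\Z[s]$; on each class we then have $F_j=Q_jG_j+R_j$ with $Q_j,R_j,G_j\in\Z[s]$ and $\deg R_j<\deg G_j$, so the eventually-positive $G_j(s)$ dominates $|R_j(s)|$, and if $R_j$ is eventually negative we replace $(Q_j,R_j)$ by $(Q_j-1,\,R_j+G_j)$, restoring $0\le R_j<G_j$. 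When $\deg g=0$, instead observe that $F_j(s)\bmod G_j(s)$ is periodic in $s$ and refine $m$ by that period, making $r$ eventually constant on each class. In all cases $f=qg+r$ with eventually $0\le r<g$, and $\deg r\le\deg g$ is automatic, since $0\le r(t)<g(t)$ for large $t$ keeps $r/g$ bounded, which fails if $\deg r>\deg g$.

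For part~(4): run the Euclidean algorithm symbolically using part~(3) — put $r_0=f$, $r_1=g$, let $r_{i+1}$ be the part~(3) remainder of $r_{i-1}$ on division by $r_i$ (so eventually $0\le r_{i+1}<r_i$), and track Bézout coefficients $r_i=p_if+q_ig$ with $p_i,q_i\in\eqp$ via $p_{i+1}=p_{i-1}-\hat q_i p_i$ and $q_{i+1}=q_{i-1}-\hat q_i q_i$, where $\hat q_i\in\eqp$ is the quotient; the last nonzero $r_i$ is then $d(t)=\gcd\big(f(t),g(t)\big)$ together with its Bézout identity. The main obstacle is showing this terminates after finitely many \emph{symbolic} steps, since for each fixed large $t$ the ordinary algorithm runs for $\Theta(\log t)$ steps. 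To handle this, re-apply part~(1) after each division so that, on each residue class, every $r_i$ is a polynomial in $\Z[s]$, and fix such a class. The $s$-degrees of the $r_i$ are non-increasing; suppose they stabilized at some $\delta\ge 1$. For large $s$ the ratio $r_{i-1}(s)/r_i(s)$ then tends to a positive constant, so $\hat q_i=\floor{r_{i-1}(s)/r_i(s)}$ is bounded, hence eventually a constant integer $k_i$, with $k_i\ge 1$ because $r_{i-1}>r_i$ eventually; thus eventually $r_{i+1}=r_{i-1}-k_ir_i$ as an identity in $\Z[s]$, and since by stabilization $\deg r_{i+1}=\delta$, the positive integer leading coefficients satisfy $\ell_{i+1}=\ell_{i-1}-k_i\ell_i\le\ell_{i-1}-\ell_i$, forcing $\ell_{i-1}>\ell_i$ for all later $i$ — an impossible infinite strictly decreasing sequence in $\Z_{>0}$. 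Hence the degree cannot stabilize above $0$ (and at a given level only finitely many steps occur, by the same bound on leading coefficients), so it descends to $0$; thereafter the $r_i$ are eventually constant positive integers, and refining the period by the first such constant makes the remaining remainders honest strictly decreasing nonnegative integers, so the process stops. Since there are finitely many residue classes it stops globally, and reading off the final step yields $d,p,q\in\eqp$.

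Finally, part~(5) is immediate from part~(1): on each residue class, $f(ms+j)-g(ms+j)$ agrees eventually with a polynomial in $\Z[s]$, which has constant sign for large $s$, so $\max(f,g)$ eventually equals $f$ on that class or eventually equals $g$; assembling these choices exhibits $\max(f,g)\in\eqp$. I expect part~(4), and specifically the termination argument, to be the crux — everything else is bookkeeping on residue classes once part~(1) is in hand.
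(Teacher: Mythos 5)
Your overall strategy mirrors the approach the paper sketches (and attributes to Chen--Li--Sam): reduce to honest polynomials on arithmetic progressions via part~(1), then argue division and the Euclidean algorithm residue class by residue class. Your part~(1) via the binomial basis $\binom{t}{k}$ is a valid alternative to the paper's shorter argument (clear denominators by taking $m$ to be their lcm, then observe that the constant term of $f(ms+j)$ is the integer $f(j)$). Part~(5) is the same as the paper's. Your termination argument for the symbolic Euclidean algorithm in part~(4) is the genuinely substantive addition, and it is correct: the paper simply says ``run the extended Euclidean algorithm'' without addressing why only finitely many \emph{symbolic} division steps occur, whereas your descent on $(\deg r_i,\text{leading coefficient})$ settles this cleanly. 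This is exactly the kind of detail a self-contained proof needs.

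However, there is a real gap in your parts~(2) and~(3), and it is worth spelling out because the paper's own sketch has the same soft spot. In part~(2) you divide $F_j$ by $G_j$ over $\Q[s]$ and assert that ``reassembling the $Q_j$ and $R_j$ \ldots produces $q,r\in\eqp$,'' and in part~(3) you ``re-apply part~(1) to refine $m$ so that the quotient is given by polynomials in $\Z[s]$.'' Neither step is justified, and in fact no amount of refining residue classes will make the $\Q[s]$-quotient and $\Q[s]$-remainder integer-valued in general. Take the paper's own example $f(t)=t^2+3t$, $g(t)=2t+1$. On any progression $t=ms+j$ the unique pair with $\deg R<\deg G$ satisfies $Q(ms+j)=\tfrac{2(ms+j)+5}{4}$ and $R\equiv -\tfrac{5}{4}$; since $2j+5$ is odd, $Q$ is never integer-valued and $R$ is never an integer. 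So one cannot obtain $q,r\in\eqp$ with $\deg r<\deg g$ for this pair at all — taken literally, part~(2) is problematic, and part~(3) cannot be deduced from it. The cure is to prove part~(3) directly: set $q(t)=\floor{f(t)/g(t)}$ and $r(t)=f(t)-q(t)g(t)$; writing $f/g=Q+R/g$ over $\Q$ with $\deg R<\deg g$, one checks that on each residue class $Q(ms+j)$ differs from an integer-valued polynomial in $s$ by the constant fractional part of $Q(j)$, and since $R/g\to 0$ the floor is eventually that integer-valued polynomial plus a constant, hence a quasi-polynomial. This gives $q,r\in\eqp$ with $0\le r<g$ and $\deg r\le\deg g$. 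Your part~(4) and everything downstream in the paper use only this integer-division form (the Euclidean steps and step~(5) of R\o dseth's algorithm can be carried out with floor or ceiling division), so this repair suffices; but as written, your derivation of~(3) from~(2) does not go through.
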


These are proved in Section 4 of \cite{CLS12} (and also in \cite{CW11}), so in Section 4 we merely give an outline of the important steps.

\begin{remark}
\label{remark:wlog}
Lemma \ref{lemma:tools}(1) will allow us to often simply say ``without loss of generality, $f\in\Z[t]$'': we may analyze $f(ms+j)$ for each $j$, recognize that statements may be false for small $s$, and then convert back to $t$ using $s=(t-j)/m$.
\end{remark}

Using this remark, we may assume that the $a_i(t)$ are polynomials in $\Z[t]$. Since they are of degree at most 1 and eventually positive, we have that 
\[a_i(t)=\alpha_i t + \beta_i,\]
 with either $\alpha_i\in\Z_{+}$,
$\beta_i\in\Z$ or $\alpha_i=0$, $\beta_i\in\Z_+$.
Furthermore, without loss of generality, we may assume that they are \emph{ordered} so that
\[\beta_i\alpha_j\le \beta_j\alpha_i\]
for $i\le j$, that is (for $\alpha_i\ne 0$),
\[
\frac{\beta_1}{\alpha_1} \le
\frac{\beta_2}{\alpha_2} \le
\cdots \le \frac{\beta_n}{\alpha_n}.
\]
We first consider a degenerate case, where
$\beta_i\alpha_j=\beta_j\alpha_i$, for all $i,j$. 

\begin{lemma}
\label{lemma:degen}
If $\beta_i\alpha_j=\beta_j\alpha_i$, for all $i,j$, then there are $\alpha_0, \beta_0\in\Z_{\ge 0}$ such that, for all $i$,  $\alpha_i=\gamma_i\alpha_0$ and $\beta_i=\gamma_i\beta_0$ (for some $\gamma_i\in\Z_+$). Therefore 
\[F\big(a_1(t),\ldots,a_n(t)\big)=F(\gamma_1,\ldots,\gamma_n)\cdot(\alpha_0t+\beta_0)\]
is a polynomial of degree at most 1.
\end{lemma}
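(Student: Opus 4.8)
The plan is to prove the two claims of Lemma~\ref{lemma:degen} in turn: first the structural statement that all the pairs $(\alpha_i,\beta_i)$ are integer multiples of a single primitive pair $(\alpha_0,\beta_0)$, and then the formula for the Frobenius number as a consequence.

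\textbf{Step 1: extracting the common ratio.} The hypothesis $\beta_i\alpha_j=\beta_j\alpha_i$ for all $i,j$ says exactly that the vectors $(\alpha_i,\beta_i)\in\Z^2$ are pairwise parallel, hence all lie on a single line through the origin. Since each $a_i(t)$ is eventually positive, each vector $(\alpha_i,\beta_i)$ is nonzero (it is either $(\alpha_i,\beta_i)$ with $\alpha_i>0$, or $(0,\beta_i)$ with $\beta_i>0$), and in fact each lies in the closed first quadrant with positive first-quadrant ``direction.'' So there is a well-defined primitive vector $(\alpha_0,\beta_0)\in\Z_{\ge 0}^2$, with $\gcd(\alpha_0,\beta_0)=1$, spanning this line in the direction of the $(\alpha_i,\beta_i)$; then $(\alpha_i,\beta_i)=\gamma_i(\alpha_0,\beta_0)$ for a unique positive rational $\gamma_i$, and since $(\alpha_0,\beta_0)$ is primitive and $(\alpha_i,\beta_i)$ is integral, $\gamma_i\in\Z_+$. (Here I would note the mild degenerate sub-case $\alpha_i=0$ for all $i$, where $(\alpha_0,\beta_0)=(0,1)$ and $\gamma_i=\beta_i$; and the sub-case where some $\alpha_i=0$ and some $\alpha_j\ne 0$ cannot occur, since then $\beta_i\alpha_j=\beta_j\alpha_i=0$ forces $\beta_i=0$, contradicting eventual positivity of $a_i$.)

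\textbf{Step 2: the Frobenius formula.} From Step~1, $a_i(t)=\gamma_i(\alpha_0 t+\beta_0)$ for all $i$. Write $L(t)=\alpha_0 t+\beta_0$, which is eventually a positive integer. Then the semigroup $\ideal{a_1(t),\ldots,a_n(t)}=\ideal{\gamma_1 L(t),\ldots,\gamma_n L(t)} = L(t)\cdot\ideal{\gamma_1,\ldots,\gamma_n}$, and likewise the group $\sum_i \Z a_i(t)=L(t)\cdot\sum_i\Z\gamma_i$. An integer $u$ lies in $\sum_i\Z a_i(t)$ but not in the semigroup precisely when $u=L(t)v$ for some $v\in\sum_i\Z\gamma_i\setminus\ideal{\gamma_1,\ldots,\gamma_n}$; scaling by the positive factor $L(t)$ preserves order, so the largest such $u$ is $L(t)$ times the largest such $v$, i.e. $F\big(a_1(t),\ldots,a_n(t)\big)=F(\gamma_1,\ldots,\gamma_n)\cdot L(t)=F(\gamma_1,\ldots,\gamma_n)\cdot(\alpha_0 t+\beta_0)$, using the convention for $F$ of not-necessarily-coprime arguments set up in the introduction. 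Since $F(\gamma_1,\ldots,\gamma_n)$ is a fixed integer and $\alpha_0 t+\beta_0$ has degree at most $1$, this is a polynomial of degree at most $1$, as claimed.

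\textbf{Anticipated obstacle.} There is no real analytic difficulty here; the only thing to be careful about is bookkeeping around the definition of $F$ when the generators share a common factor. One must invoke the introduction's convention $F(c_1,\ldots,c_n)=d\cdot F(c_1/d,\ldots,c_n/d)$ (with $d=\gcd$) and check that the scaling argument in Step~2 is consistent with it — which it is, essentially by definition, since the identity ``largest excluded group element scales linearly under multiplying all generators by a positive integer'' is exactly what that convention is designed to encode. The degenerate sub-cases in Step~1 ($\alpha_0=0$, or mixed zero/nonzero leading coefficients) should be dispatched in a sentence each.
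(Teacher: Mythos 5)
Your proof is correct and takes essentially the same route as the paper: the paper's first step extracts $\gamma_i=\gcd(\alpha_i,\beta_i)$ and uses a divisibility argument to show the reduced pairs $(\alpha_i/\gamma_i,\beta_i/\gamma_i)$ all coincide, which is exactly your geometric observation that all $(\alpha_i,\beta_i)$ are positive integer multiples of one primitive vector. Your Step 2 usefully spells out the scaling argument that the paper leaves to the reader (``the proof follows'').
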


Therefore, we can assume that $\beta_1\alpha_n<\beta_n\alpha_1$. In particular, the polynomials $a_1$ and $a_n$ do not share a linear factor, and so if $d(t)=\gcd\big(a_1(t),a_n(t)\big)$, then $d(t)\in\eqp$ is of degree 0, that is, $d(t)$ is eventually a periodic function. Let
$S(t)\defeq\ideal{a_1(t),\ldots,a_n(t)}$, and let $T(t)\defeq\ideal{
a_1(t),a_n(t)}$. $T(t)$ is a semigroup with only two generators, so
it is much easier to analyze. The following lemma allows us to cover $S(t)$
with a finite number of translated copies of $T(t)$.

\begin{lemma}
\label{lemma:decomp}
For $a_i(t)=\alpha_it+\beta_i$ as described above,  there exists a finite set $H$
of integer-valued polynomials of degree at most 1 such that
\[
\ideal{a_1(t),\ldots,a_n(t)}=\bigcup_{h\in H}
\Big(h(t) + \ideal{a_1(t),a_n(t)} \Big)
\]
(for $t$ sufficiently large so that $a_i(t)>0, \forall i$).
Furthermore, $0\in H$ and the other $h\in H$ are eventually positive.
\end{lemma}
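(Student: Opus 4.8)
The plan is to exploit the fact that we have sorted the generators by slope: each ``middle'' generator $a_i$, $1<i<n$, will be written as a \emph{nonnegative rational} combination of the two extreme generators $a_1$ and $a_n$, and then an arbitrary representation of an element of $S(t)$ will be reduced by replacing large multiples of each $a_i$ by combinations of $a_1$ and $a_n$, leaving only a bounded remainder.

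Recall that we have arranged $\frac{\beta_1}{\alpha_1}\le\cdots\le\frac{\beta_n}{\alpha_n}$ and that we are past the degenerate case, so $\beta_1\alpha_n<\beta_n\alpha_1$ and the vectors $(\alpha_1,\beta_1)$ and $(\alpha_n,\beta_n)$ are linearly independent. For each $i$ with $1<i<n$ I would solve $(\alpha_i,\beta_i)=\lambda_i(\alpha_1,\beta_1)+\mu_i(\alpha_n,\beta_n)$ over $\Q$; Cramer's rule gives $\lambda_i=\frac{\alpha_i\beta_n-\alpha_n\beta_i}{\alpha_1\beta_n-\alpha_n\beta_1}$ and $\mu_i=\frac{\alpha_1\beta_i-\alpha_i\beta_1}{\alpha_1\beta_n-\alpha_n\beta_1}$, and the slope ordering (which says exactly $\beta_1\alpha_i\le\beta_i\alpha_1$ and $\beta_i\alpha_n\le\beta_n\alpha_i$) together with $\alpha_1\beta_n-\alpha_n\beta_1>0$ forces $\lambda_i,\mu_i\ge0$. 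As an identity in $\Q[t]$ this reads $a_i(t)=\lambda_i a_1(t)+\mu_i a_n(t)$. Choosing $N\in\Z_+$ to clear all the denominators at once, so that $N\lambda_i$ and $N\mu_i$ are nonnegative integers for every $i$, we obtain $N\,a_i(t)=(N\lambda_i)\,a_1(t)+(N\mu_i)\,a_n(t)\in\ideal{a_1(t),a_n(t)}$ as a polynomial identity, valid for all $t$.

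Now, given $u\in S(t)$, fix a representation $u=\sum_{i=1}^n p_i a_i(t)$ with $p_i\in\Z_{\ge0}$ (which exists once $a_i(t)>0$ for all $i$), and for each middle index $i$ write $p_i=Nq_i+r_i$ with $q_i\ge0$ and $0\le r_i<N$. Substituting $p_i a_i(t)=q_i\big((N\lambda_i)a_1(t)+(N\mu_i)a_n(t)\big)+r_i a_i(t)$ and collecting the $a_1$ and $a_n$ contributions exhibits $u$ as $h(t)$ plus a nonnegative integer combination of $a_1(t)$ and $a_n(t)$, where $h(t)=\sum_{i=2}^{n-1} r_i a_i(t)$. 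Accordingly I set
\[H=\setBuilder{\sum_{i=2}^{n-1} r_i a_i(t)}{r_i\in\Z,\ 0\le r_i<N},\]
a finite set of polynomials in $\Z[t]$ of degree at most $1$; taking all $r_i=0$ gives $0\in H$, and every other $h\in H$ is a nonempty nonnegative combination of the eventually positive $a_i(t)$, hence eventually positive. The computation above gives $S(t)\subseteq\bigcup_{h\in H}\big(h(t)+\ideal{a_1(t),a_n(t)}\big)$ whenever all $a_i(t)>0$, while the reverse inclusion is immediate since each $h(t)\in S(t)$ and $a_1(t),a_n(t)\in S(t)$, so each translate lies in the semigroup $S(t)$.

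The only genuinely delicate points are the sign check $\lambda_i,\mu_i\ge0$ — this is precisely where the hypothesis that the generators are ordered by slope, and that the extreme slopes differ, enters — and the bookkeeping of sub-cases such as some $\alpha_i=0$ (so $a_i$ is a positive constant) or $(\alpha_i,\beta_i)$ parallel to $(\alpha_1,\beta_1)$ or to $(\alpha_n,\beta_n)$ (so one of $\lambda_i,\mu_i$ is $0$); in each such case the Cramer formulas still return nonnegative rationals, so no separate argument is needed. The rest is routine, and the only appeal to ``eventually'' is in guaranteeing $a_i(t)>0$ and the positivity of the nonzero $h\in H$.
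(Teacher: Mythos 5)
Your proof is correct and takes essentially the same route as the paper's: the paper also expresses $r\,a_i(t)=p\,a_1(t)+q\,a_n(t)$ with $r=\alpha_1\beta_n-\alpha_n\beta_1$, $p=\alpha_i\beta_n-\alpha_n\beta_i\ge 0$, $q=\alpha_1\beta_i-\alpha_i\beta_1\ge 0$, and defines $H$ as the set of $\sum_{i=2}^{n-1}\lambda_i a_i$ with $0\le\lambda_i<r$. Your Cramer's-rule derivation and the one-step division $p_i=Nq_i+r_i$ are just more compact packaging of the paper's direct computation and its iterated replacement $\lambda_i\mapsto\lambda_i-r$; note that the denominator-clearing $N$ is exactly $r$ before any reduction, so the two constructions yield the same (or a comparable finite) $H$.
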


Assume for the moment that $a_1(t)$ and $a_n(t)$ are relatively prime. By Lemma \ref{lemma:canon}(2), the set of all integers not in $T$ will be in bijection to the set
\[\setBuilder{(p,q)\in\Z^2}{0\le p< a_n(t),\, q<0},\]
under the bijection $(p,q)\mapsto pa_1(t)+qa_n(t)$. By Lemma \ref{lemma:decomp}, integers not in $S$ must also not be in $h(t)+T$, for all $h\in H$. The following lemma gives an easy way to check this.

\begin{lemma}
\label{lemma:oneh}
Let $f,g,h\in\eqp$, with $f(t)$ and $g(t)$ relatively prime for all $t$ and with $f(t),g(t),h(t)$ eventually positive. There exists $r,s\in\eqp$ such that, given $p,q\in\Z$ with $0\le p< g(t)$ and $q<0$,
\[pf(t)+qg(t)\in h(t)+\ideal{f(t),g(t)}\quad\text{if and only if}\quad (p,q)\ge \big(r(t),s(t)\big)\]
component-wise. Furthermore, $\deg(r)\le\deg(g)$ and $\deg(s)\le\max\{\deg(f),\deg(h)-\deg(g)\}$,
\end{lemma}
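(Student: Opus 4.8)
The plan is to mirror the computation of $F(t,t+1,t+2)$ in Section~2: first put $h(t)$ into canonical form with respect to $f(t)$ and $g(t)$, then write down the canonical form of an arbitrary element of $h(t)+\ideal{f(t),g(t)}$ and read off the membership condition.

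First I would show that the canonical form of $h$ is given by a pair of eventual quasi-polynomials. Since $f$ and $g$ are relatively prime for all $t$, Lemma~\ref{lemma:tools}(4) provides $p_0,q_0\in\eqp$ with $1=p_0(t)f(t)+q_0(t)g(t)$, and multiplying by $h$ gives a (generally non-canonical) representation $h=(hp_0)f+(hq_0)g$. Dividing $hp_0$ by $g$ via Lemma~\ref{lemma:tools}(3) yields $k,r\in\eqp$ with $hp_0=kg+r$ and eventually $0\le r<g$, so that $h=r(t)f(t)+s(t)g(t)$ with $s=hq_0+kf\in\eqp$; by the uniqueness in Lemma~\ref{lemma:canon}(1) this is, for all large $t$, the canonical form of $h(t)$. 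The degree bounds follow by comparing degrees in $h=rf+sg$: eventually $0\le r<g$ forces $\deg(r)\le\deg(g)$, and $\deg(s)=\deg(h-rf)-\deg(g)\le\max\{\deg(h),\deg(r)+\deg(f)\}-\deg(g)\le\max\{\deg(f),\deg(h)-\deg(g)\}$.

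Next, given $p,q\in\Z$ with $0\le p<g(t)$ and $q<0$, set $u=pf(t)+qg(t)$, so that $u-h=(p-r)f+(q-s)g$. If $p\ge r(t)$, then $0\le p-r<g$, so this is already the canonical form of $u-h$, and Lemma~\ref{lemma:canon}(2) gives $u\in h+\ideal{f,g}$ if and only if $q\ge s(t)$. If $p<r(t)$, then, since $0\le p$ and $r<g$, we have $0<p-r+g<g$, so by Lemma~\ref{lemma:canon}(1) the canonical form of $u-h$ is $(p-r+g)f+(q-s-f)g$, and membership would require $q\ge s(t)+f(t)$. The point is that this second case is vacuous: from $h=rf+sg$ with $0\le r<g$ and $f,g,h$ eventually positive one gets that $s+f$ is eventually at least $1$ (otherwise $h\le rf-fg=f(r-g)<0$), so $q\ge s+f$ contradicts $q<0$. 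Therefore, for $0\le p<g(t)$ and $q<0$, we have $u\in h(t)+\ideal{f(t),g(t)}$ if and only if $p\ge r(t)$ and $q\ge s(t)$, which is exactly the asserted componentwise inequality, with $r,s\in\eqp$ of the claimed degrees.

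The main obstacle is step one: making sure the canonical form of $h$ really can be realized by $\eqp$ functions, which is precisely what the Bézout and division-algorithm parts of Lemma~\ref{lemma:tools} supply, together with the uniqueness in Lemma~\ref{lemma:canon}(1); everything downstream is then pointwise application of Lemma~\ref{lemma:canon}. The one extra ingredient worth isolating is the observation that the ``wrap-around'' alternative $p<r$ never occurs once $q<0$, which is what collapses what could have been a staircase boundary (as in Figure~\ref{fig:ex}) into an honest up-set $\{(p,q):(p,q)\ge(r,s)\}$. The remaining degree bookkeeping and the case checks are routine.
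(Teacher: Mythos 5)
Your proof is correct and follows essentially the same route as the paper's: Bézout via Lemma~\ref{lemma:tools}(4), reduction of $h$ to canonical form $h=rf+sg$ via the division of $hp_0$ by $g$ (Lemma~\ref{lemma:tools}(3)), the same two-case analysis of the canonical form of $u-h$, and the same observation that $f+s>0$ eventually rules out the wrap-around case when $q<0$. The only cosmetic difference is that you justify $f+s>0$ by a direct contradiction ($h\le f(r-g)<0$) rather than the paper's inequality $0<h<g(f+s)$, and you spell out the degree bookkeeping a bit more explicitly; both are equivalent.
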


In our case, this gives $r,s$ of degree at most 1. If $a_1(t)$ and $a_n(t)$ are relatively prime, all that remains is to follow the implications of Lemmas \ref{lemma:decomp} and Lemma \ref{lemma:oneh}, which will give us a staircase-shaped set such as in Figure \ref{fig:ex}. If $a_1(t)$ and $a_n(t)$ are not relatively prime, we must reduce to the case where they are. Both of these are accomplished in the following lemma, which (combined with Lemma \ref{lemma:decomp} and the fact that $d(t)=\gcd\big(f(t),g(t)\big)$ is of degree 0) proves our theorem.

\begin{lemma}
\label{lemma:finish}
Let $f(t), g(t)\in \eqp$, and let $d(t)\defeq\gcd\big(f(t),g(t)\big)$ be of degree 0, let $H\subseteq\eqp$ be a finite set, and let
\[S(t)\defeq\bigcup_{h\in H} \big(h(t)+\ideal{f(t),g(t)}\big).\]
The function $F(t)$ giving the largest integer not in $S(t)$ is in $\eqp$, with degree at most $\max\{\deg(f)+\deg(g),\deg(h)\}$.
\end{lemma}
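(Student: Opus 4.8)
The plan is to strip off structure until the problem becomes the two‑generator, coprime ``staircase'' computation of Section~2, and then to read off $F\in\eqp$ from the closure properties in Lemma~\ref{lemma:tools}.

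\textbf{Reductions.} First I would use Lemma~\ref{lemma:tools}(1) and Remark~\ref{remark:wlog} to reduce to $f,g\in\Z[t]$ and every $h\in H$ in $\Z[t]$, and --- refining the progression --- to the case where $d(t)$ is eventually a fixed constant $d\in\Z_+$ and each $h(t)\bmod d$ is eventually a fixed residue $\rho_h$. Splitting integers by residue mod $d$ gives $F(t)=\max_{\rho}F_\rho(t)$, where $F_\rho(t)$ is the largest integer $\equiv\rho\pmod d$ missed by $S(t)$; by Lemma~\ref{lemma:tools}(5) it is enough to show each $F_\rho\in\eqp$ with the right degree. Fixing $\rho$ (necessarily some $\rho_h$, or else $F$ is infinite) and some $h_0\in H$ with $\rho_{h_0}=\rho$, only the $h\in H$ with $\rho_h=\rho$ matter; setting $\tilde f=f/d$, $\tilde g=g/d$, $\tilde h=(h-h_0)/d$ --- all integer-valued and in $\eqp$, with $\gcd(\tilde f(t),\tilde g(t))$ eventually $1$ and $0=\tilde h_0$ among them --- an integer $h_0(t)+dk$ lies in $S(t)$ iff $k\in\bigcup_{h:\rho_h=\rho}\big(\tilde h(t)+\ideal{\tilde f(t),\tilde g(t)}\big)$, so $F_\rho(t)=h_0(t)+d\cdot\tilde F(t)$ with $\tilde F$ the largest integer missed by that union. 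Since $\deg\tilde h\le\max_{h\in H}\deg h$ and $\deg F_\rho\le\max\{\deg h_0,\deg\tilde F\}$, I have reduced to the case $\gcd(f,g)=1$ (eventually) and $0\in H$.

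\textbf{The staircase.} For $u\in\Z$ I would take its canonical form $u=pf(t)+qg(t)$, $0\le p<g(t)$ (Definition~\ref{def:canon}); by Lemma~\ref{lemma:canon}(2), $u\in\ideal{f(t),g(t)}$ iff $q\ge0$. As $0\in H$, $u\notin S(t)$ forces $q<0$, and then Lemma~\ref{lemma:oneh} (applied in the eventual regime) gives, for each $h\in H\setminus\{0\}$, functions $r_h,s_h\in\eqp$ with $\deg r_h\le\deg g$ and $\deg s_h\le\max\{\deg f,\deg h-\deg g\}$ such that $u\in h(t)+\ideal{f(t),g(t)}$ iff $(p,q)\ge(r_h(t),s_h(t))$. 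Hence $u\notin S(t)$ iff $(p,q)$ lies in the staircase region
\[
R(t)=\setBuilder{(p,q)\in\Z^2}{0\le p<g(t),\ q<0,\ \text{and}\ \forall h\in H\setminus\{0\}:\ p<r_h(t)\ \text{or}\ q<s_h(t)}.
\]
Distributing the conjunction of the disjunctions over subsets $A\subseteq H\setminus\{0\}$ expresses $R(t)$ as a finite union of boxes $B_A(t)=\set{(p,q):0\le p\le P_A(t),\ q\le Q_A(t)}$, with $P_A(t)=\min\big(g(t)-1,\min_{h\in A}(r_h(t)-1)\big)$ and $Q_A(t)=\min\big(-1,\min_{h\notin A}(s_h(t)-1)\big)$ (empty inner minima omitted). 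Because $f(t),g(t)>0$, on each nonempty $B_A(t)$ the integer $pf(t)+qg(t)$ is maximized at the corner $(P_A(t),Q_A(t))$, which is again a canonical form; writing $u_A(t)\defeq P_A(t)f(t)+Q_A(t)g(t)$, this gives $u_A(t)\notin S(t)$, while conversely any $u\notin S(t)$ lies in some $B_A(t)$, so $u\le u_A(t)$. Therefore
\[
F(t)=\max\setBuilder{u_A(t)}{A\subseteq H\setminus\{0\}\text{ with }P_A(t)\ge0}.
\]
Now $P_A,Q_A\in\eqp$ (Lemma~\ref{lemma:tools}(5), via $\min(x,y)=-\max(-x,-y)$), hence $u_A=P_Af+Q_Ag\in\eqp$ since $\eqp$ is closed under sums and products. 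Restricting to a further arithmetic progression of $t$, each $u_A$ eventually equals a polynomial and each condition ``$P_A(t)\ge0$'' is eventually a fixed truth value (a polynomial has eventually constant sign); at least one $A$ is eventually admissible, e.g.\ $A=\set{h:r_h(t)\ge1\text{ eventually}}$, for which $P_A(t)\ge\min(g(t)-1,0)\ge0$. So $F$ eventually agrees with a maximum of finitely many polynomials, hence with a single polynomial, and $F\in\eqp$. For the degree, $\deg P_A\le\deg g$ and $\deg Q_A\le\max_h\deg s_h\le\max\{\deg f,\max_{h\in H}\deg h-\deg g\}$ give $\deg u_A\le\max\{\deg f+\deg g,\max_{h\in H}\deg h\}$; combined with $\deg F_\rho\le\max\{\deg h_0,\deg\tilde F\}$ from the reduction, this yields the claimed bound $\max\{\deg f+\deg g,\deg h\}$.

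\textbf{Main obstacle.} The $\eqp$ bookkeeping is routine given Lemma~\ref{lemma:tools} together with the fact that a maximum of finitely many polynomials eventually agrees with one of them. The two points needing real care are: (i) the reduction to $\gcd(f,g)=1$, $0\in H$ --- tracking residues mod $d$, the shift by $h_0$, the rescaling by $d$, checking both that $0$ survives into the rescaled index set and that the degree estimate is preserved; and (ii) the identity $F(t)=\max_A u_A(t)$ --- verifying that the maximum over $R(t)$ is attained at one of the finitely many corners $(P_A(t),Q_A(t))$, that each such corner is genuinely a canonical form of an integer outside $S(t)$ (so no spurious large value slips in), and that the degenerate situations (empty boxes, constraints made vacuous when $r_h(t)\ge g(t)$ or $s_h(t)\ge0$) cause no trouble. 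I expect (ii) --- pinning down the combinatorics of the staircase exactly as in the $t=5$ picture of Figure~\ref{fig:ex} --- to be where the real work lies.
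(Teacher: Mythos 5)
Your proof is correct and follows the same overall structure as the paper's: reduce modulo $d=\gcd(f,g)$ to the coprime case, describe the complement of $S(t)$ via canonical forms and Lemma~\ref{lemma:oneh}, and extract $F$ as a finite $\max$ of elements of $\eqp$ via Lemma~\ref{lemma:tools}(5). The one tactical difference lies in how the staircase corners are enumerated: the paper orders the pairs $\big(r_h(t),s_h(t)\big)$ (discarding eventually-dominated ones), appends the sentinels $(0,0)$ and $\big(g(t),-f(t)\big)$, and reads off the $m+1$ corners $\big(r_{i+1}-1,s_i-1\big)$; you instead distribute the conjunction of disjunctions over all $2^{|H\setminus\{0\}|}$ subsets $A$, writing the complement as a union of boxes $B_A$ and maximizing over box-corners. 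Both yield a finite $\max$ and hence the $\eqp$ conclusion and degree bound; the paper's ordering produces fewer candidate corners, while your version avoids the ordering argument at the cost of (harmlessly) redundant boxes. One point to tighten: Lemma~\ref{lemma:oneh} assumes the translate $h$ is eventually positive, so in your reduction to ``$0\in H$'' you should pick $h_0$ to be the \emph{eventually minimal} element of $\setBuilder{h\in H}{\rho_h=\rho}$ --- the paper does exactly this --- so that every $\tilde h=(h-h_0)/d$ with $\tilde h\neq 0$ is eventually positive before you invoke that lemma.
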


In our case, this gives that $F$ is of degree at most 2.

\section{Proofs of Lemmas}
\begin{proof}[Proof of Lemma \ref{lemma:canon}]
Part 1 is a standard result, using the extended Euclidean algorithm and the fact that if $c=p'a+q'b$ is one solution, then all solutions are given by $c=(p'-kb)a+(q'+ka)b$ for $k\in\Z$.

For part 2, we have $c=pa+qb$ with $0\le p <b$. The reverse implication is immediate: if $q\ge 0$, then we have written $c=pa+qb$ with $p,q\in \Z_{\ge 0}$, proving that $c\in\ideal{a,b}$. Conversely, suppose $c\in \ideal{a,b}$ so that $c=p'a+q'b$ with $p',q'\in\Z_{\ge 0}$. By part 1, if $k$ and $r$ are the quotient and remainder when $p'$ is divided by $b$, then
\[c=ra+(q'+ka)b\]
is the canonical form for c, with $q=q'+ka\ge 0$.
\end{proof}

\begin{proof}[Proof of Lemma \ref{lemma:tools}]
As these are proved in Section 4 of \cite{CLS12}, we simply give an outline here.

For part 1, this is mostly obvious: simply take component polynomials of the quasi-polynomial. The main subtlety, as seen in the example from Section 2, is that integer-valued polynomials may have non-integral coefficients. In this case, let $m$ be the least common multiple of the denominators of the coefficients. Examining $f(ms+i)$, we see that all coefficients of $s^k$ must be integral, except possibly the constant coefficient; since the function is integer-valued, the constant coefficient must also be integral.

For part 2, simply perform polynomial division. The main subtlety is the following: Suppose, for example, that $f(t)\defeq t^2+3t$ and $g(t)\defeq 2t+1$. Then the leading coefficient of $g$ does not divide the leading coefficient of $f$, and the traditional polynomial division algorithm would produce quotients that are not integer-valued. Instead, we look separately at each residue class of $t$ modulo  the leading coefficient of $g$; for example, if $t$ is odd, then $t=2s+1$ for some $s\in\Z_{\ge 0}$, so substituting gives $f(2s+1)=4s^2+10s+3$ and $g(2s+1)=4s+3$, and now the leading term does divide evenly.

For part 3, first perform the polynomial division algorithm for part 2. For example, suppose $f(t)\defeq 2t-3$ and $g(t)\defeq t$, yielding $f=2g+ -3$. For part 3, however, we want the integer division algorithm: $f(t)=1g(t)+(t-3)$, and the remainder $t-3$ is between 0 and $g$ as long as $t\ge 3$. In other words, if we have found $f=q'g+r'$ with $\deg(r')<\deg(g)$, but we eventually have $r'(t)<0$, then we should use quotient $q\defeq q'-1$ and remainder $r\defeq g+r'$ instead, as eventually $0\le g(t)+r'(t)<g(t)$.

For part 4, run the extended Euclidean algorithm, repeatedly performing the division algorithm of part 2.

For part 5, note that given two polynomials, one eventually dominates the other.
\end{proof}

\begin{proof}[Proof of Lemma \ref{lemma:degen}]
For each $i$, let $\gamma_i\defeq\gcd(\alpha_i, \beta_i)\in\Z_+$, let $\alpha'_i\defeq\alpha_i/\gamma_i$, and let $\beta'_i\defeq\beta_i/\gamma_i$, with $\alpha'_i$ and $\beta'_i$ relatively prime. Dividing $\beta_i\alpha_j=\beta_i\alpha_i$ by $\gamma_i\gamma_j$ yields $\beta'_i\alpha'_j=\beta'_j\alpha'_i$. Since $\alpha'_j$ divides $\beta'_j\alpha'_i$ and is relatively prime to $\beta'_j$, we have that $\alpha'_j$ divides $\alpha'_i$. Similarly, $\alpha'_i$ divides $\alpha'_j$, and so $\alpha'_i=\alpha'_j$. Similarly, $\beta'_i=\beta'_j$. Taking $\alpha_0$ to be the common $\alpha'_i$ (equal for all $i$) and $\beta_0$ to be the common $\beta'_i$, the proof follows.
\end{proof}

\begin{proof}[Proof of Lemma \ref{lemma:decomp}]

Let $r\defeq\beta_n\alpha_1-\beta_1\alpha_n$. We are given that $r>0$ and that $\beta_j\alpha_i-\beta_i\alpha_j\ge 0$ for all $i\le j$. Let
\[H=\setBuilder{\lambda_2a_2+\cdots+\lambda_{n-1}a_{n-1}}{\lambda_i\in\Z_{\ge 0},\,\lambda_i<r}.\]
We will prove that $H$ has the required properties; certainly every element of $H$ is of degree at most 1. We must show that, if $u\in \ideal{a_1(t),\ldots,a_n(t)}$, then there
exists a $\lambda\in\Z_{\ge 0}^n$ such that $u=\lambda_1a_1(t)+\cdots+\lambda_na_n(t)$ and $\lambda_i<r$ for
$2\le i\le n-1$.

By definition of the semigroup, there exists a $\lambda\in\Z_{\ge 0}^n$ such that
$u=\lambda_1a_1(t)+\cdots+\lambda_na_n(t)$. Suppose that $\lambda_i\geq r$ for some $i$ with
$2\le i\le n-1$.  Let
$p=\beta_n\alpha_i-\beta_i\alpha_n$ and $q=\beta_i\alpha_1-\beta_1\alpha_i$ and observe that $p,q\ge0$.
Then
\begin{align*}
pa_1(t)+qa_n(t) &=
(\beta_n\alpha_i-\beta_i\alpha_n)(\alpha_1 t + \beta_1) +
  (\beta_i\alpha_1-\beta_1\alpha_i)(\alpha_n t + \beta_n) \\ &=
\beta_n\alpha_i\alpha_1t - \beta_i\alpha_n\beta_1 +
  \beta_i\alpha_1\beta_n - \beta_1\alpha_i\alpha_nt \\&=
(\beta_n\alpha_1-\beta_1\alpha_n)(\alpha_it+\beta_i)\\
&= ra_i(t).
\end{align*}
Now define $\lambda'$ by $\lambda'_1\defeq\lambda_1+p$, $\lambda'_i\defeq\lambda_i-r$, $\lambda'_n\defeq\lambda_n+q$, and $\lambda'_j\defeq\lambda_j$ for all other $j$. Then $$u=\lambda'_1a_1(t)+\cdots+\lambda'_na_n(t)$$
is a new representation of $u$. Repeating this
process eventually yields a representation with $\lambda_i<r$ for $2\le i\le n-1$.
\end{proof}

\begin{proof}[Proof of Lemma \ref{lemma:oneh}]
Assume $0\le p< g(t)$ and $q<0$. We need conditions on $p,q$ for which $pf(t)+qg(t)-h(t)\in\ideal{f(t),g(t)}$. By Lemma \ref{lemma:tools}(4) and the fact that $f(t)$ and $g(t)$ are relatively prime, we may write $1=r'(t)f(t)+s'(t)g(t)$, where $r',s'\in\eqp$. Multiplying this equation by $h(t)$ yields 
\[h(t)=\big(h(t)r'(t)\big)f(t)+\big(h(t)s'(t)\big)g(t).\]
If we let $k(t)$ and $r(t)$ be the quotient and remainder when $h(t)r'(t)$ is divided by $g(t)$, using Lemma \ref{lemma:tools}(3), and let $s(t)=h(t)s'(t)+k(t)f(t)$, then we have
\[h(t)=r(t)f(t)+s(t)g(t),\]
with $0\le r(t)<g(t)$. In particular, this gives us that $\deg(r)\le \deg(g)$ and therefore $\deg(s)\le\max\{\deg(f),\deg(h)-\deg(g)\}$.
We have
\begin{equation}\label{eqn:canon}pf(t)+qg(t)-h(t)=\big(p-r(t)\big)f(t)+\big(q-s(t)\big)g(t).\end{equation}

\proofCase{p\ge r(t)} Then $0\le p-r(t)< g(t)$, and (\ref{eqn:canon}) is in canonical form. Then $pf(t)+qg(t)-h(t)\in\ideal{f(t),g(t)}$ if and only if $q-s(t)\ge 0$, by Lemma \ref{lemma:canon}(2).

\proofCase{p<r(t)} Then the canonical form for $pf(t)+qg(t)-h(t)$ will be 
\begin{equation}\label{eqn:canon2} \big(p-r(t)+g(t)\big)f(t)+\big(q-s(t)-f(t)\big)g(t), \end{equation}
since $0\leq p<r(t)$ and $r(t)<g(t)$ imply that
\[ 0 < p-r(t)+g(t) < g(t). \]
Since $h(t)$, $r(t)$, $f(t)$, and $g(t)$ are eventually positive and $r(t)<g(t)$, we eventually have
\[0<h(t)=r(t)f(t)+s(t)g(t)<g(t)\big(f(t)+s(t)\big),\]
and so eventually $f(t)+s(t)>0$. Since we are assuming that $q<0$, eventually $q-s(t)-f(t)<0$. Therefore the canonical form  (\ref{eqn:canon2}) shows that $pf(t)+qg(t)-h(t)\notin \ideal{f(t),g(t)}$, by Lemma \ref{lemma:canon}(2).

Combining the two cases, we see that $pf(t)+qg(t)\in h(t)+\ideal{f(t),g(t)}$ exactly when $p\ge r(t)$ and $q\ge s(t)$, as desired.
\end{proof}

\begin{proof}[Proof of Lemma \ref{lemma:finish}]
Since $d(t)\in\eqp$ is of degree zero, it is eventually a periodic function. By Remark \ref{remark:wlog}, we may focus on a component of $d(t)$ and assume that $d(t)=d$ is a constant.

\proofCase{d=1}
We assume, without loss of generality, that $0\in H$ and the other functions in $H$ are eventually positive: if not, let $h_0(t)$ be the eventually minimal polynomial in $H$, and find the largest integer $F'(t)$ not in
\[S'(t)=\bigcup_{h\in H}\bigg(\big(h(t)-h_0(t)\big)+\ideal{f(t),g(t)}\bigg);\]
then $F(t)=F'(t)+h_0(t)$.

We wish to describe the set $U(t)$ of $(p,q)$ corresponding to canonical forms $u=pf(t)+qg(t)$ such that $u\notin S$. Canonical implies that $0\le p<g(t)$, and $u\notin \left(0+\ideal{f(t),g(t)}\right)$ implies that $q<0$, by Lemma \ref{lemma:canon}(2). Each $h\in H\setminus\{0\}$ gives the condition ``$p<r_h(t)$ or $q<s_h(t)$'', by Lemma \ref{lemma:oneh}. Then the set $U(t)$ has a ``stairstep'' shape as in Figure \ref{fig:ex}. To be precise, order the $\big(r_h(t),s_h(t)\big)$ such that $r_1(t)\le r_2(t)\le\cdots\le r_m(t)$ (eventually). Notice that if $r_i(t)\le r_j(t)$, then we may assume without loss of generality that $s_i(t)>s_j(t)$ (eventually), or else the condition ``$p<r_j(t)$ or $q<s_j(t)$'' would be redundant. Further include $\big(r_0(t),s_0(t)\big)=(0,0)$ (corresponding to $u\notin 0+\ideal{f(t),g(t)}$) and $\big(r_{m+1}(t),s_{m+1}(t)\big)= \big(g(t),-f(t)\big)$ (corresponding to $(p,q)$ being canonical). Finally, for $0\le i\le m$, let $\big(\alpha_i(t),\beta_i(t)\big)=\big(r_{i+1}(t)-1,s_i(t)-1\big)$. Then the set of $(p,q)\in U(t)$ is exactly the set such that $0\le p$ and
\[(p,q)\le \big(\alpha_0(t),\beta_0(t)\big)\quad\text{or}\quad\cdots\quad\text{or}\quad(p,q)\le \big(\alpha_m(t),\beta_m(t)\big).\]
Since if $(p,q)\le\big(\alpha_i(t),\beta_i(t)\big)$, then $pf(t)+qg(t)\le \alpha_i(t)f(t)+\beta_i(t)g(t)$, our candidates for the largest integer not in $S(t)$ are the $\alpha_i(t)f(t)+\beta_i(t)g(t)$, $0\le i\le m$. Then our final answer is
\[\max_{0\le i\le m} \alpha_i(t)f(t)+\beta_i(t)g(t),\]
which is in $\eqp$, by Lemma \ref{lemma:tools}(5). Furthermore, using that $\deg(\alpha_i)=\deg(r_i)\le \deg(g)$ and $\deg(\beta_i)=\deg(s_i)\le\max\{\deg(f),\deg(h)-\deg(g)\}$, by Lemma \ref{lemma:oneh}, we have that our final degree is at most
$\max\{\deg(f)+\deg(g),\deg(h)\}$.

\proofCase{d> 1}
Let $H_j(t)\defeq\setBuilder{h\in H}{h(t)\equiv
  j\bmod d}$. The remainder when a given $h(t)$ is divided by $d$ is a periodic function, by Lemma \ref{lemma:tools}(3); applying Remark \ref{remark:wlog} as necessary, we may assume that these remainders are constant, that is, that $H_j=H_j(t)$  does not depend on $t$. Let $S_j(t)=\setBuilder{u\in
  S(t)}{u\equiv j\bmod d}$. Since $d$ divides every element of $\ideal{f(t),g(t)}$, 
  \[
S_j(t) = \bigcup_{h\in H_j}\bigg(h(t) + \ideal{f(t),g(t)}\bigg)
.\]
Let $F_j(t)$ be the largest integer not in
\[
\frac{1}{d}(S_j(t)-j) =
\bigcup_{h\in H_j}
\left(\frac{h(t)-j}{d}+\left\langle\frac{f(t)}{d},\frac{g(t)}{d}\right\rangle\right)
.\]
By the case $d=1$ proved above, $F_j(t)$ is in $\eqp$. Since $dF_j(t)+j$ is the maximum $u\in j+d\Z$ such
that $u\notin S_j(t)$, we get that
$F(t)=\max_{j}\left(dF_j(t)+j\right)$. Lemma \ref{lemma:tools}(5) then implies that
$F(t)$ is in $\eqp$.
\end{proof}

\section{Proof of Corollary \ref{cor:wagon}}
Without loss of generality, we may assume that $b_1=0$ (substituting $s=t-b_1$, if necessary). For a fixed $j\in\Z$, look at all $t\equiv j\bmod b_n$, and let $s$ be such that  $t=b_ns+j$. Let $a_i(s)\defeq t+b_i=b_ns+j+b_i$. We must prove that $F\big(a_1(s),\ldots,a_n(s)\big)$ is a \emph{polynomial} in $s$. We follow the proof of Theorem \ref{thm:main} and of the lemmas, looking for places where periodicity might be introduced.  Note that the $a_i(s)$ are correctly ordered (as defined before the statement of Lemma \ref{lemma:degen}) so that Lemma \ref{lemma:decomp} gives us
\[S=\bigcup_{h\in H}\big(h(s)+\ideal{a_1(s),a_n(s)}\big).\]

Following the proof of Lemma \ref{lemma:decomp}, note that each $h\in H$ is a nonnegative integer combination of $a_2(s),\ldots,a_{n-1}(s)$. In particular, each $h\in H$ has the form $kb_ns+\ell$ for some integers $k$ and $\ell$.

Focus on a specific $h=kb_ns+\ell$. Let
\[d(s)\defeq\gcd\big(a_1(s),a_n(s)\big)=\gcd(b_ns+j,b_ns+j+b_n)=\gcd(b_n,b_ns+j)=\gcd(j,b_n),\]
which is a constant. Finding $p$ and $q$ with $pj+qb_n=d$ gives us
\[d=(ps-q+p)a_1(s)-(ps-q)a_n(s).\]
Assume for the moment that $d=1$. We next examine the proof of Lemma \ref{lemma:oneh}. The first step is to take 
\begin{align*}
h&=h\cdot 1=h\cdot\big((ps-q+p)a_1(s)-(ps-q)a_n(s)\big)\\
&=\big((kb_ns+\ell)(ps-q+p)\big)a_1(s) - \big((kb_ns+\ell)(ps-q+p)\big)a_n(s),
\end{align*}
and write it in canonical form. This involves taking the remainder when dividing
\[(kb_ns+\ell)(ps-q+p)=kpb_ns^2+(\ell p-kb_nq+kb_np)s+(-\ell q+\ell p)\]
by $a_n(s)=b_ns+j+b_n$. The first step of polynomial long division works over the integers, because the leading coefficient, $b_n$, of $a_n(s)$ divides into the leading coefficient, $kpb_n$, of the dividend. This leaves a remainder that is linear in $s$. One more step of long division (dividing a linear in $s$ function by a linear in $s$ function and taking the linear in $s$ remainder) will then lead to the final answer, without introducing extra periodicity.

No other steps in the entire proof have a possibility of adding periodicity, so we are done, in the case $d=1$.

If $d>1$, following the proof of Lemma \ref{lemma:finish} in the $d>1$ case, let $b'_n=b_n/d$, $j'=j/d$, and $\ell'=\floor{\ell/d}$ be integers. Then the reduction to the $d=1$ case gives that we must examine,
\[\floor{\frac{h(s)}{d}}+\ideal{\frac{a_1(s)}{d},\frac{a_n(s)}{d}}=(kb'_ns+\ell')+\ideal{b'_ns+j',b'_ns+j'+b'_n},\]
and we have reduced to the $d=1$ case.

\section{Proof of Theorem \ref{thm:3gens}}
The $n=1$ case is trivial: $F\big(a_1(t)\big)=-a_1(t)$ is the usual interpretation.

The $n=2$ case follows from Sylvester's formula:
\[F\big(a_1(t),a_2(t)\big)=\lcm\big(a_1(t),a_2(t)\big)-a_1(t)-a_2(t)=\frac{a_1(t)a_2(t)}{\gcd\big(a_1(t),a_2(t)\big)}-a_1(t)-a_2(t),\]
and the gcd can be computed using Lemma \ref{lemma:tools}.

For the $n=3$ case, we show that the steps of R\o dseth's algorithm \cite{Rodseth78} can be performed on eventual quasi-polynomials. We enumerate the steps of the algorithm as described for integers, and follow each step with a comment on how it works with elements of $\eqp$. 

\begin{enumerate}
\item[(1)] If $d(t)\defeq\gcd\big(a_1(t),a_2(t),a_3(t)\big)\ne 1$, then use that
\[F\big(a_1(t),a_2(t),a_3(t)\big)=d(t)F\left(\frac{a_1(t)}{d(t)},\frac{a_2(t)}{d(t)},\frac{a_3(t)}{d(t)}\right).\]
\end{enumerate}

For elements of $\eqp$, Lemma \ref{lemma:tools}(4) allows us to compute this gcd.

\begin{enumerate}
\item[(2)] We may assume that $a_1(t), a_2(t),a_3(t)$ are relatively prime where $a_1(t),a_2(t),a_3(t)\in\eqp$. If $e(t)\defeq \gcd\big(a_1(t),a_2(t)\big)\ne 1$, then use that
\[F\big(a_1(t),a_2(t),a_3(t)\big)=e(t)F\left(\frac{a_1(t)}{e(t)},\frac{a_2(t)}{e(t)},a_3(t)\right)+a_3(t)\big(e(t)-1\big).\]
\end{enumerate}
\noindent This can again be done using Lemma \ref{lemma:tools}(4).

\begin{enumerate}
\item[(3)] We may assume that $a_1(t)$ and $a_2(t)$ are relatively prime.
Compute $s_0(t)\in\eqp$ such that $a_2(t)s_0(t)\equiv a_3(t)\bmod{a_1(t)}$ and $0\le s_0(t)< a_1(t)$.
\end{enumerate}
\noindent To do this for elements of $\eqp$, first use Lemma \ref{lemma:tools}(4) to find $p,q\in\eqp$ such that $1=p(t)a_1(t)+q(t)a_2(t)$. Then
\[a_2(t)\big(a_3(t)q(t)\big)=a_3(t)\big(q(t)a_2(t)\big)\equiv a_3(t)\cdot 1\bmod{a_1(t)}.\]
Now let $s_0(t)$ be the remainder when $a_3(t)q(t)$ is divided by $a_1(t)$, using Lemma \ref{lemma:tools}(3). This is the desired $s_0(t)$.

\begin{enumerate}
\item[(4)] If $s_0(t)=0$, then $a_3(t)$ is a multiple of $a_1(t)$, and the Frobenius problem reduces to the $n=2$ case.
\end{enumerate}
\noindent If some of the components of $s_0(t)\in\eqp$ are zero, we will have to split into cases, one for each component.

\begin{enumerate}
\item[(5)] Compute $q_1,s_1\in\eqp$ such that
\[a_1(t)=q_1(t)s_0(t)-s_1(t),\quad 0\le s_1(t)<s_0(t).\]
\end{enumerate}
\noindent If $\deg(s_0)=0$, this is a slight variant of the usual integer division algorithm. If $\deg(s_0)>0$, use Lemma \ref{lemma:tools}(2) to compute $a_1(t)=q(t)s_0(t)+r(t)$, with $\deg(r) <\deg(s_0)$. If $r(t)$ is eventually negative or zero, simply set $q_1(t)=q(t)$ and $s_1(t)=-r(t)$, and we have that eventually $0\le s_1(t)<s_0(t)$. If $r(t)$ is eventually positive, set $q_1(t)=q(t)+1$ and $s_1(t)=s_0(t)-r(t)$, which will eventually satisfy the bounds.

\begin{enumerate}
\item[(6)] Continue computing
\begin{align*}
s_0(t)&=q_2(t)s_1(t)-s_2(t), \quad0\le s_2(t)<s_1(t),\\
s_1(t)&=q_3(t)s_2(t)-s_3(t), \quad 0\le s_3(t)<s_2(t),\\
\ &\ \, \vdots\\
s_{m-2}(t)&=q_{m}(t)s_{m-1}(t)-s_m(t), \quad 0\le s_m(t)<s_{m-1}(t),\\
s_{m-1}(t)&=q_{m+1}(t)s_m(t)+s_{m+1},\quad s_{m+1}=0.
\end{align*}
\end{enumerate}
\noindent This is simply repeating the process from Step 5. We must establish that it terminates. It suffices to show that, if $\deg(s_i)>0$, then $\deg(s_{i+2})<\deg(s_i)$ (and then once $\deg(s_i)=0$, the integers $s_i$ strictly decrease, so this will eventually terminate). Indeed, as described in Step 5, Lemma \ref{lemma:tools}(2) gives us $s_{i-1}(t)=q(t)s_i(t)+r(t)$, with $\deg(r) <\deg(s_i)$. If $r(t)$ is eventually negative or zero, we have $s_{i+1}(t)=-r(t)$, and the degree has decreased. If $r(t)$ is eventually positive, we have $s_{i+1}(t)=s_{i}(t)-r(t)$, which still have the same degree as $s_i$. But in the next step, $s_{i+1}(t)=1\cdot s_i(t)-r(t)$, and so $s_{i+2}(t)=r(t)$ has lower degree. Of course, different components of the eventual quasi-polynomials may terminate at different $m$, and we  must analyze each separately.

\begin{enumerate}
\item[(7)] Define $s_{-1}(t)=a_1(t)$, $P_{-1}(t)=0$, $P_0(t)=1$, and recursively $P_{i+1}(t)=q_{i+1}(t)P_i(t)-P_{i-1}(t)$. Then 
\[0=\frac{s_{m+1}(t)}{P_{m+1}(t)}<\frac{s_m(t)}{P_m(t)}<\cdots<\frac{s_0(t)}{P_0(t)}<\frac{s_{-1}(t)}{P_{-1}(t)}=\infty.\]
Determine the unique index $i$ such that
\[\frac{s_{i+1}(t)}{P_{i+1}(t)}\le \frac{a_3(t)}{a_2(t)}<\frac{s_i(t)}{P_i(t)}.\]
\end{enumerate}
\noindent Restricting to a fixed residue class modulo the period of the quasi-polynomials, such an index $i$ exists, since each $s_i(t)/P_i(t)$ is a rational function, so is eventually greater than, eventually less than, or eventually equal to $a_3(t)/a_2(t)$.

\begin{enumerate}
\item[(8)] Then
\begin{align*}
F\big(a_1(t),a_2(t),a_3(t)\big)=-a_1&(t)+a_2(t)\left(s_i(t)-1\right)+a_3(t)\big(P_{i+1}(t)-1\big)\\
& - \min\big\{a_2(t)s_{i+1}(t),a_3(t)P_i(t)\big\}.
\end{align*}
\end{enumerate}
\noindent This is in $\eqp$, by Lemma \ref{lemma:tools}(5).

%\bibliographystyle{plain}
%\bibliography{parfrob}  
% copy pasted from .bbl

\end{document}